


\documentclass[12pt]{article}

\usepackage{setspace}

\usepackage{color}   
\usepackage{hyperref}
\hypersetup{
	colorlinks=true, 
	linktoc=all,     
	linkcolor=blue,  
	citecolor=red,
}
\usepackage{geometry}                
\geometry{letterpaper}                   
\usepackage{graphicx}
\usepackage{amssymb}
\usepackage{amsmath}
\usepackage{appendix}
\usepackage{tikz}
\usepackage[numbers]{natbib}

\usetikzlibrary{positioning}
\usetikzlibrary{arrows}

\newtheorem{properties}{Properties}[section]

\newtheorem{theorem}{Theorem}[section]
\newtheorem{corollary}{Corollary}[section]
\newtheorem{lemma}{Lemma}[section]

\newenvironment{proof}{{\noindent\it Proof}\quad}{\hfill $\square$\par} 

\numberwithin{figure}{section}
\numberwithin{equation}{section}

\usepackage{algorithm}
\usepackage{algorithmic}
\usepackage{bm}
\newcommand{\relu}{\mbox{{\rm ReLU}}}

\usepackage{stmaryrd}

\begin{document}

\title{ReLU Deep Neural Networks from \\the Hierarchical Basis Perspective}
\author{Juncai He\footnotemark[1] \quad Lin Li\footnotemark[2]\quad  Jinchao Xu\footnotemark[3]}
\date{} 

\maketitle
\renewcommand{\thefootnote}{\fnsymbol{footnote}} 
\footnotetext[1]{Department of Mathematics, The University of Texas at Austin, Austin, TX 78712, USA (jhe@utexas.edu).} 
\footnotetext[2]{Beijing International Center for Mathematical Research, Peking University,
	Beijing 100871, China (lilin1993@pku.edu.cn). } 
\footnotetext[3]{Department of Mathematics, The Pennsylvania State University, University Park, PA 16802, USA (xu@math.psu.edu).} 

\begin{abstract}
We study ReLU deep neural networks (DNNs) by investigating 
their connections with the hierarchical basis method in finite element methods.
First, we show that the approximation schemes
of ReLU DNNs for $x^2$ and $xy$ are composition 
versions of the hierarchical basis approximation for these two functions. 
Based on this fact, we obtain a geometric interpretation and systematic proof
for the approximation result of ReLU DNNs for polynomials, which
plays an important role in a series of recent exponential 
approximation results of ReLU DNNs.  Through our investigation of 
connections between ReLU DNNs and the hierarchical basis 
approximation for $x^2$ and $xy$, we show that ReLU DNNs with this 
special structure can be applied only to approximate quadratic functions. 
Furthermore, we obtain a concise representation to explicitly reproduce any 
linear finite element function on a two-dimensional uniform mesh by using 
ReLU DNNs with only two hidden layers. 
\end{abstract}


%
%
%
%
%

\section{Introduction}\label{sec:intro}
Models based on artificial neural networks (ANNs) have achieved unexpected success in 
a wide range of machine learning and artificial intelligence fields such as computer vision, 
natural language processing, and reinforcement learning~\cite{lecun2015deep, goodfellow2017deep}. 
Mathematical analysis of ANNs can be carried out using many different approaches, including
via the popular approximation and representation theory of ANNs, 
which is critical to developing a mathematical understanding of these models. 
From the 1990s onward, researchers have studied approximation properties for single
hidden layer neural networks as in \cite{hornik1989multilayer, cybenko1989approximation, jones1992simple, barron1993universal, leshno1993multilayer, ellacott1994aspects, pinkus1999approximation, klusowski2018approximation, siegel2020approximation, xu2020finite, siegel2020high, lu2021learning}. 
However, the results of several recent studies~\cite{krizhevsky2012imagenet, he2016deep, huang2017densely} indicate that compared with neural networks with one hidden layer, 
those with more hidden layers -- referred as ``deep neural networks (DNNs)'' -- can achieve
much better performance.
These results motivate further exploration of
the approximation and expression theory of DNNs. Specifically, it is natural to ask how the depth in 
DNNs affects and contributes to the approximation performance and expressive power.

Generally speaking, DNNs with any non-polynomial activation
function can demonstrate what has come to be known as ``universal approximation ability''
~\cite{leshno1993multilayer, siegel2020approximation}.
However, of the activation functions available, the rectified linear unit~\cite{nair2010rectified} activation function
${\rm ReLU(x)} = \max\{ 0, x\}$ is the most successful and, therefore, also the most popular.  
The main reason for its success is twofold: 
Not only can ReLU DNNs achieve state-of-the-art performance in practice, but they also show
rich mathematical structures and properties in theory. Thus, understanding and interpreting 
the approximation efficacy of ReLU DNNs has become a crucial topic, as evidenced by a growing body
of literature~\cite{montufar2014number,telgarsky2016benefits, poggio2017and, 
	yarotsky2017error, lu2017expressive, e2018exponential, montanelli2019new, montanelli2019deep,
	lu2020deep, guhring2020error, marcati2020exponential}. 
From the viewpoint of representation properties,  two notable results~\cite{montufar2014number,telgarsky2016benefits} 
show that ReLU DNNs can produce complicated continuous piece-wise linear functions 
whose number of linear regions grows as the power of  the number of layers. 
Then, \cite{yarotsky2017error} established the first exponential approximation rate for a general function class through the use of ReLU DNN with limited width, by discovering an elegant
approximation property for the square function $s(x) = x^2$ on $[-1,1]$ employing ReLU DNNs.
In the wake of that discovery, numerous research studies have been published in which various exponential error bounds are developed for classical or modified ReLU DNNs for different function families or measurements.
For example, \cite{lu2017expressive} introduced a new and more uniform network architecture to 
achieve these results in \cite{yarotsky2017error}. Then, \cite{e2018exponential} further
improved the network structure by involving architecture similar to that of ResNet~\cite{he2016deep,he2016identity}, thereby obtaining
an exponential convergence rate for a special class of analytic function. 
\cite{montanelli2019new, montanelli2019deep, lu2020deep} investigated the approximation properties 
on Koborov space, bandlimited functions, and $C^s$ functions by approximating sparse grids, 
truncated Chebyshev series, and Taylor expansions.
A series of results~\cite{opschoor2020deep, opschoor2019exponential, 
	guhring2020error, marcati2020exponential} for different function spaces and norms have been obtained
by combining approximation properties in finite element methods.
Although numerous results have been achieved based on the crucial approximation
result of ReLU DNN for the square function $s(x) = x^2$, 
the field still lacks the in-depth understanding needed to extend this special result.

In this paper, we will first concentrate on understanding and interpreting the approximation 
properties of ReLU DNNs for both the square function $s(x)=x^2$ and 
the multiplication function $m(x,y) = xy$ from the  hierarchical basis perspective~\cite{bungartz2004sparse, griebel2005sparse}. 
Briefly, our main discovery is that the ReLU DNN approximation for $s(x) = x^2$
is actually the hierarchical basis approximation for $x^2$ with a composition scheme. 
This observation provides a completely new viewpoint from which to understand the 
ReLU DNN approximation for $x^2$. At the same time, it will greatly simplify the proof developed 
in~\cite{yarotsky2017error} and lead to a 
more precise error estimation. Based on these results, we prove that only quadratic functions can
be efficiently approximated with a special ReLU DNN architecture. 
Through a further investigation of the ReLU DNN approximation for $m(x,y) = xy$ 
from the hierarchical basis viewpoint, we find a geometric interpretation for the result reported in~\cite{yarotsky2017error} originally derived from the pure algebraic relation.
According to this new understanding and interpretation of the approximation properties of ReLU DNNs for $s(x)=x^2$ and $m(x,y) = xy$ from the viewpoint of hierarchical basis and finite element interpolation, we achieve some more accurate and tighter error estimations about the approximation results of ReLU DNNs for polynomials with multi-variables that are than the results in~\cite{yarotsky2017error, e2018exponential, montanelli2019new, montanelli2019deep}.
Furthermore, we achieve a very unexpected and important result from our study of the connections between ReLU DNNs in relation to the hierarchical basis method: that is, we obtain an explicit and concise formula
to represent any linear finite element function on uniform mesh on $\mathbb{R}^2$ with only two hidden layers, which is generally considered to be not trivial and very complicated as discussed in~\cite{arora2018understanding, he2020relu}.

This paper is organized as follows. In Section~\ref{sec:reludnn_CPWL}, we will introduce some
notations and preliminary results for ReLU DNNs. 
In Section~\ref{sec:HB_DNN}, we account for the approximation 
properties of ReLU DNNs for the square function $s(x) = x^2$ by using the hierarchical basis method and show that only quadratic functions can achieve an exponential approximation rate under a specific ReLU DNN architecture. 
In Section~\ref{sec:expressive}, we draw on these same techniques to study the multiplication function $m(x,y) = xy$ in order to meet two related goals: (i) to capture some further approximation and expressive properties of ReLU DNNs for $m(x,y)$, and (ii) to obtain tighter error estimations for approximating multi-variable polynomials. 
In Section~\ref{sec:2DFEMandDNN}, we present a detailed account of our unexpected discovery from the previous investigation of $m(x,y)$. Specifically, we show how to represent a finite element function with uniform mesh on 2D by using ReLU DNNs with only two hidden layers.
In Section~\ref{sec:conclusion}, we offer some concluding remarks.

\section{Preliminary results of ReLU DNNs}\label{sec:reludnn_CPWL}
In this section, we will briefly discuss the definition and properties of
the DNNs generated by using the ReLU activation function.
\subsection{The classical DNN with ReLU activation function}
First, we introduce the general plain DNN function
$f: \mathbb{R}^d \to \mathbb{R}^c$ with L hidden layers by
\begin{equation}\label{eq:DNNdef_J}
	\begin{cases}
		f^{\ell}(x) &= \sigma \circ  \theta^{\ell} (f^{\ell-1}(x)) \quad \ell = 1:L \\
		f(x) &= \theta^{L+1}(f^L(x))
	\end{cases},
\end{equation}
with $f^{0}(x) = x$ where 
\begin{equation}\label{thetamap}
	\theta^{\ell}:\mathbb{R}^{n_{\ell-1}}\to\mathbb{R}^{n_{\ell}} ,
\end{equation}
is the (vector) linear function defined by
\begin{equation}\label{key}
	\theta^\ell(x)=W^\ell x+b^\ell,
\end{equation}
where
$W^\ell=(w^\ell_{ij})\in\mathbb{R}^{n_{\ell}\times n_{\ell-1}}$, $b^\ell \in\mathbb{R}^{n_{\ell}}$.

In the above definition, we denote the nonlinear activation function as 
\begin{equation}\label{sigma}
	\sigma: \mathbb{R} \to \mathbb{R}.
\end{equation} 
By applying the function to each component, we can extend this
naturally to 
\begin{equation}\label{key}
	\sigma:\mathbb R^{n}\mapsto \mathbb R^{n}.
\end{equation}
Given $d, L\in\mathbb{N}^+$ and  
\begin{equation}\label{key}
	n_{1:L}= (n_1,\dots,n_{L}) \in (\mathbb{N}^+)^{L} \mbox{ with }n_0=d, n_{L+1}=1, 
\end{equation}
the general DNN from $\mathbb{R}^d$ to $\mathbb{R}$ defined above  is denoted by
\begin{equation}\label{key}
	\mathcal N_{n_{1:L}} := \{ f :  f(x) = \theta^{L+1}\circ \sigma \circ \theta^{L} \circ \sigma \cdots \circ \theta^2 \circ \sigma \circ \theta^1(x)\}.
\end{equation}
A DNN of this kind, referred to as an $(L+1)$-layer DNN, is said
to have $L$ hidden layers. 
Unless stated otherwise, the term ``layer'' should always
be taken to mean ``hidden layer'' in the rest of this paper.
The size of this DNN is $n_1+\cdots+n_L$.  For the activation functions, in this paper, 
we focus on considering a special activation function, known as the {``rectified linear unit''}
(ReLU), and defined as $\relu: \mathbb R\mapsto \mathbb R$,
\begin{equation}
	\label{relu}
	\relu(x)=\max(0,x), \quad x\in\mathbb{R}. 
\end{equation}
A ReLU DNN with $L$ hidden layers might be written as
\begin{equation}
	\label{relu-dnn}
	f(x) = \theta^{L+1}\circ \relu \circ \theta^{L} \circ \relu \cdots \circ \theta^2 \circ \relu \circ \theta^1(x).
\end{equation}
Furthermore, if $n_{1:L}  = (N, N, \cdots, N)$ we denote
\begin{equation}\label{key}
	\mathcal N_{n_{1:L}} := \mathcal N_L^N ,
\end{equation}
for simplicity.

\subsection{A class of ResNet-type ReLU DNNs}
In this subsection, we will introduce a class of specific ReLU DNNs with some
special short-cut connections, as described in \cite{yarotsky2017error, e2018exponential}. 

Following an idea similar to that above, for any $n_{1:L}= (n_1,\dots,n_{L}) \in (\mathbb{N}^+)^{L} \mbox{ with }n_0=d, n_{L+1}=1$, 
we define the next ReLU DNN with special short-cut connections:
\begin{equation}\label{def:ReLUDNN2}
	\begin{cases}
		f^{1}(x) &= \sigma \circ \widehat\theta^1 (x) \\
		f^{\ell}(x) &= \sigma \circ  \widehat\theta^{\ell} ([x, f^{\ell-1}(x)]) \quad \ell = 2:L \\
		f(x) &= \widehat\theta^{L+1}( [x, f^1, \cdots, f^L] ) \\
	\end{cases}.
\end{equation}
Here, we have $[x, f^{\ell-1}(x)] \in \mathbb{R}^{d + n_{\ell-1}}$ for 
$\ell = 2:L$, $[x, f^1, \cdots, f^L] \in \mathbb{R}^{\sum_{i=0}^L n_i}$ 
and $f^\ell \in \mathbb{R}^{n_\ell}$ for $\ell=1:L$.
In addition, we note that $\widehat\theta^\ell$ is an affine mapping to $\mathbb{R}^{n_\ell}$.
Furthermore, we denote the above function class as $\widehat{\mathcal N}_{n_{1:L}}$ and
\begin{equation}\label{key}
	\widehat{\mathcal N}_{L}^{N} = \widehat{\mathcal N}_{n_{1:L}},
\end{equation}
if $n_{1:L} = (N, N, \cdots, N)$. 
It is easy to see that this function class is quite different from the
standard ReLU DNN models. For $\widehat{\mathcal N}_{L}^{N}$, 
it always needs the original input data $x$ to be the input for all hidden layers. In addition, the last
output layer of $\widehat{\mathcal N}_{L}^{N}$ collects outputs from all hidden layers and then make a linear combination. 
Fig.~\ref{fig:N32} shows more intuitive differences between $\mathcal N_{L}^N$ and $\widehat{\mathcal N}_{L}^N$ where $L=3$ and $N=2$.
\begin{figure}[H]
	\centering
	\includegraphics[width=.4\textwidth]{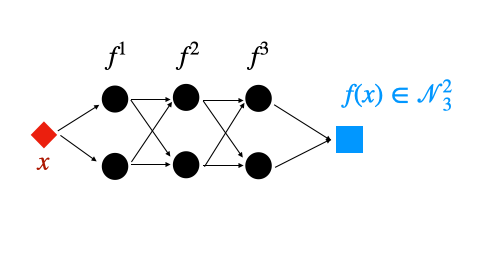}\quad
	\includegraphics[width=.4\textwidth]{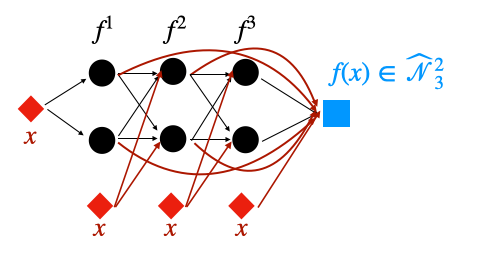}
	\caption{Diagrams for $\mathcal N_{3}^2$ (left) and $\widehat{\mathcal N}_{3}^2$ (right).}
	\label{fig:N32}
\end{figure}

Given the $\relu$ function whereby
\begin{equation}\label{eq:propofRELU}
	x = \relu(x) - \relu(-x),
\end{equation}
we can, however, reconstruct the special ReLU DNN model $\widehat{\mathcal N}_{L}^{N}$
from the standard ReLU DNN model as shown in the following Lemma.

\begin{lemma}(Connection between $\widehat{\mathcal N}_{L}^{N}$ and ${\mathcal N}_{L}^{M}$)\label{lem:HatN_N}
	For the fixed-width case, we have the next connection: 
	\begin{equation}\label{key}
		\widehat{\mathcal N}_{L}^{N} \subset {{\mathcal N}}_{L}^{N + 2(d+1)}.
	\end{equation}
\end{lemma}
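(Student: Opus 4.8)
The plan is to show that every $f\in\widehat{\mathcal N}_{L}^{N}$ can be realized by a standard plain ReLU DNN of the same depth $L$ and width $N+2(d+1)$, by constructing, layer by layer, a network whose hidden state simulates the three distinct pieces of information that the definition~\eqref{def:ReLUDNN2} implicitly carries: (i) the current hidden vector $f^{\ell}\in\mathbb{R}^{N}$, (ii) the raw input $x$, which every hidden layer must receive, and (iii) the running partial sum of the output combination $\widehat\theta^{L+1}([x,f^1,\dots,f^L])$. The first block costs $N$ neurons, and the whole argument reduces to showing the other two pieces can be transported in only $2d$ and $2$ extra neurons respectively.

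First I would handle the propagation of the input. Since a plain ReLU layer destroys sign information, I use the identity~\eqref{eq:propofRELU}, namely $x=\relu(x)-\relu(-x)$, and keep the pair $(\relu(x),\relu(-x))\in\mathbb{R}^{2d}$ alive in every layer. These components regenerate across a layer because $\relu(\relu(x)-\relu(-x))=\relu(x)$ and $\relu(\relu(-x)-\relu(x))=\relu(-x)$, so each of their pre-activations is an affine function of the previous layer's neurons. With $x$ thus reconstructible linearly at layer $\ell-1$, the ResNet update $f^{\ell}=\relu(\widehat\theta^{\ell}([x,f^{\ell-1}]))$ becomes an ordinary affine-then-ReLU step on the simulated state, so the $N$-neuron block evolves exactly as prescribed.

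The main obstacle is the output layer, which forms a linear combination of all $L$ hidden vectors at once; carrying every $f^\ell$ forward would cost order $LN$ neurons. I would resolve this by exploiting that the output is scalar ($n_{L+1}=1$): writing $\widehat\theta^{L+1}([x,f^1,\dots,f^L])=b+c_0\cdot x+\sum_{\ell=1}^{L}c_\ell\cdot f^\ell$, I accumulate the scalar partial sum $s_\ell=\sum_{k=1}^{\ell-1}c_k\cdot f^k$ with a one-layer delay and transport it through the ReLU layers as the pair $(\relu(s_\ell),\relu(-s_\ell))\in\mathbb{R}^{2}$, using the same sign trick. The delay is forced because $f^\ell$ only becomes available as a ReLU output at layer $\ell$ and so cannot enter the pre-activation of $s_\ell$ at that same layer; the single missing term $c_L\cdot f^L$ is then injected directly into the final affine map $\theta^{L+1}$, which simultaneously recovers $c_0\cdot x$ and $s_L$ from their stored positive and negative parts.

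Collecting the three blocks, each hidden layer of the simulating network uses $N+2d+2=N+2(d+1)$ neurons, and a routine check that the first layer (where $s_1=0$) and the output layer also fit this budget yields the inclusion $\widehat{\mathcal N}_{L}^{N}\subset\mathcal N_{L}^{N+2(d+1)}$. I expect the only delicate point to be the bookkeeping of the one-layer delay, ensuring that $s_L$ together with the directly injected $c_L\cdot f^L$ reproduces the full sum $\sum_{\ell=1}^{L}c_\ell\cdot f^\ell$ exactly; everything else is a direct verification that each required pre-activation is affine in the preceding layer's neurons.
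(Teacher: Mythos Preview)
Your proposal is correct and follows essentially the same construction as the paper: both split each hidden layer into an $N$-block carrying $\widehat f^{\ell}$, a $2d$-block carrying $(\relu(x),\relu(-x))$ to reconstruct $x$ via~\eqref{eq:propofRELU}, and a $2$-block carrying the positive and negative parts of the running scalar partial sum of the output combination, with the last term $c_L\cdot f^L$ injected directly in $\theta^{L+1}$. The only cosmetic difference is that the paper folds the input contribution $c_0\cdot x$ into the partial sum already at layer~$1$, whereas you defer it to the final affine map; both bookkeeping choices yield the same width $N+2(d+1)$.
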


\begin{proof}
	Assume that $\widehat f \in \widehat{{\mathcal N}}_{L}^{N}$. This means that there exist $\widehat \theta^{\ell}$ for $\ell = 1:L+1$ such that
	\begin{equation}\label{key}
		\widehat f =  \widehat\theta^{L+1}([x, \widehat f^1, \cdots, \widehat f^L]) = \sum_{\ell=0}^L [\widehat \theta^{L+1}]_{\ell} (\widehat f^\ell),
	\end{equation}
	where $\widehat f^0 = x$, $\widehat f^1 = \sigma\circ \widehat \theta^1 (x)$ and 
	\begin{equation}\label{key}
		\widehat f^\ell = \sigma\circ \widehat \theta^\ell ([x, \widehat f^{\ell-1}]) = \sigma ( [\widehat \theta^\ell ]_{x}(x) + [\widehat \theta^\ell ]_{f}(\widehat f^{\ell-1})),
	\end{equation}
	as defined in \eqref{def:ReLUDNN2}.
	Then, we can construct a function $f \in {{\mathcal N}}_{L}^{N + 2(d+1)}$ such that 
	\begin{equation}\label{key}
		f^1 =  \sigma \circ 
		\begin{pmatrix}
			\widehat \theta^1 (x) \\
			x \\
			-x \\
			[\widehat \theta^{L+1}]_0 (x) \\
			-[\widehat \theta^{L+1}]_0 (x)
		\end{pmatrix} \in \mathbb{R}^{N + 2(d+1)},
	\end{equation}
	and 
	\begin{equation}\label{key}
		f^\ell =  \sigma \circ 
		\begin{pmatrix}
			[\widehat \theta^\ell ]_{f}\left( [f^{\ell-1}]_{1:N}\right) + [\widehat \theta^\ell ]_{x}\left( [f^{\ell-1}]_{N+1:N+d} - [f^{\ell-1}]_{N+d+1:N+2d}\right)  \\
			[f^{\ell-1}]_{N+1:N+d} - [f^{\ell-1}]_{N+d+1:N+2d} \\
			-\left( [f^{\ell-1}]_{N+1:N+d} - [f^{\ell-1}]_{N+d+1:N+2d}\right) \\
			[\widehat\theta^{L+1}]_{\ell-1}([f^{\ell-1}]_{1:N}) + [f^{\ell-1}]_{N+2d+1} - [f^{\ell-1}]_{N+2(d+1)} \\
			-\left([\widehat\theta^{L+1}]_{\ell-1}([f^{\ell-1}]_{1:N}) + [f^{\ell-1}]_{N+2d+1} - [f^{\ell-1}]_{N+2(d+1)}\right)
		\end{pmatrix} \in \mathbb{R}^{N + 2(d+1)},
	\end{equation}
	for all $\ell = 2:L$.
	By recursion, we have
	\begin{equation}\label{key}
		f^\ell =  \sigma \circ 
		\begin{pmatrix}
			\widehat \theta^\ell ([x, \widehat f^{\ell-1}])  \\
			x  \\
			-x  \\
			\sum_{i=0}^{\ell-1}  [\widehat \theta^{L+1}]_{i} (\widehat f^i) \\
			-\left(\sum_{i=0}^{\ell-1}  [\widehat \theta^{L+1}]_{i} (\widehat f^i) \right)
		\end{pmatrix} \in \mathbb{R}^{N + 2(d+1)},
	\end{equation}
	for all $\ell = 2:L$.
	Then, we finish the proof by choosing the following special $\theta^{L+1}$ such that
	\begin{equation}\label{key}
		\begin{aligned}
			f &= \theta^{L+1}(f^L) \\
			&= [\widehat \theta^{L+1}]_L( [f^L]_{1:N}) - [f^L]_{N+2d+1} + [f^L]_{N+2(d+1)} \\
			&= [\widehat \theta^{L+1}]_L( \widehat f^L) + \sigma\left(\sum_{i=0}^{L-1}  [\widehat \theta^{L+1}]_{i} (\widehat f^i) \right) -\sigma\left(-\sum_{i=0}^{L-1}  [\widehat \theta^{L+1}]_{i} (\widehat f^i) \right) \\
			&= \sum_{\ell=0}^L [\widehat \theta^{L+1}]_{\ell} (\widehat f^\ell) = \widehat f.
		\end{aligned}
	\end{equation}
\end{proof}
The properties described next play a crucial role in the structure and approximation properties 
of $\widehat{{\mathcal N}}_{L}^{N} $, which can also be found in \cite{e2018exponential}.
\begin{properties}\label{prop:net class}
	(Addition and composition of $\widehat{{\mathcal N}}_{L}^{N} $)
	\begin{itemize}
		\item Addition
		$$
		\widehat{{\mathcal N}}_{L_1}^{N}  + \widehat{{\mathcal N}}_{L_2}^{N}  \subseteq \widehat{{\mathcal N}}_{L_1 + L_2}^{N},
		$$
		i.e.,  if $f_1\in \widehat{{\mathcal N}}_{L_1}^{N}$ and $f_2\in \widehat{{\mathcal N}}_{L_2}^{N}$, then $f_1+f_2 \in \widehat{{\mathcal N}}_{L_1 + L_2}^{N}$.
		\item Modified composition
		$$
		\widehat{{\mathcal N}}_{L_2}^{N+1} \diamond  \widehat{{\mathcal N}}_{L_1}^{N}\subseteq \widehat{{\mathcal N}}_{L_1 + L_2}^{N+1}.
		$$ 
		Here, if $f_1(x)\in \widehat{{\mathcal N}}_{L_1}^{N}$ and $f_2(x_0,x) \in \widehat{{\mathcal N}}_{L_2}^{N+1}$  with $x_0\in \mathbb{R}$, 
		then the modified composition is defined by
		\begin{equation}\label{key}
			f_2 \diamond f_1 =  f_2(f_1(x),x).
		\end{equation}
		Then, the above property means that
		\begin{equation}\label{key}
			f_2 \diamond f_1  = f_2(f_1(x),x) \in \widehat{{\mathcal N}}_{L_1+L_2}^{N+1}.
		\end{equation}
	\end{itemize}
\end{properties}
\begin{proof}
	For the addition property,  if $f\in \widehat{{\mathcal N}}_{L_1}^{N}$ and $g\in \widehat{{\mathcal N}}_{L_2}^{N}$, we first construct $h \in \widehat{{\mathcal N}}_{L_1 + L_2}^{N}$ with 
	\begin{equation}\label{key}	
		h_\ell(x) = f_\ell(x), \quad \text{for} \quad \ell = 1:L_1.
	\end{equation}
	Because the input of $h_{L_1 + 1}$ includes the original input $x$ directly as
	in the definition of $\widehat{{\mathcal N}}_{L_1 + L_2}^{N}$ in \eqref{def:ReLUDNN2},
	we define
	\begin{equation}\label{key}
		h_{L_1 + 1}(x) = \widehat{\theta}_h^{L_1+1}([x,h_{L_1}(x)]) = \widehat{\theta}_g^1(x) = g_1(x).
	\end{equation}
	Then, we can construct
	\begin{equation}\label{key}
		h_{L_1 + \ell}(x) = g_\ell(x), \quad \text{for} \quad \ell = 2:L_2.
	\end{equation}
	Finally, we obtain
	\begin{equation}\label{key}
		\begin{aligned}
			h(x) &= \widehat{\theta}_{h}^{L_1 + L_2+1} ([x,h_1, \cdots, h_{L_1 + L_2 }]) \\
			&= \widehat{\theta}_{f}^{L_1 +1} ([x,h_1, \cdots, h_{L_1}])  +  \widehat{\theta}_{g}^{L_2+1} ([x,h_{L_1 + 1}, \cdots, h_{L_1 + L_2 }]) \\
			&= \widehat{\theta}_{f}^{L_1 +1} ([x,f_1, \cdots, f_{L_1}])  +  \widehat{\theta}_{g}^{L_2+1} ([x,g_{1}, \cdots, g_{L_2 }]) \\
			&= f(x)+ g(x).
		\end{aligned}
	\end{equation}
	Then, the modified composition property can be directly proved by the definition of $\widehat{{\mathcal N}}_{L}^{N}$ in \eqref{def:ReLUDNN2}.
\end{proof}
From the proofs of Lemma~\ref{lem:HatN_N} and Properties~\ref{prop:net class}, we can see
that the skip connection structure ($f^{\ell}(x) = \sigma \circ  \widehat\theta^{\ell} ([x, f^{\ell-1}(x)])$) in \eqref{def:ReLUDNN2}, which
differs from the skip connection in the classical ResNet~\cite{he2016deep,he2016identity}, is essential to guaranteeing the previous lemma and properties. 

\section{ReLU DNN and hierarchical basis for the function $x^2$}\label{sec:HB_DNN}
In this section, we begin by discussing the hierarchical basis~\cite{griebel2005sparse} and the DNN approximation for the function $s(x) = x^2$. 
Then, we show that  $s(x)=x^2$ is the unique function which can be directly approximated by composition from the hierarchical basis viewpoint.
\subsection{ReLU DNN approximation of $x^2$: A hierarchical basis interpretation}
We consider a set of equidistant grids $\mathcal T_\ell$ for level $\ell \ge 0$ on the unit interval $\bar{\Omega}=[0,1]$ with mesh size $h_\ell = 2^{-\ell}$. 
Here, we denote grid $\mathcal T_\ell$ as
\begin{equation}\label{def:T_ell}
	\mathcal T_\ell := \left\{  \left.  x_{\ell,i} ~\right|~ x_{\ell,i}=ih_\ell,~ 0\le i\le 2^\ell \right\}.
\end{equation} 
At grid point $x_{\ell,i}$, the nodal basis function $\phi_{\ell,i}(x)$ on $\mathcal T_\ell$ is defined as
\begin{equation}
	\phi_{\ell,i}(x):=
	g\left(\frac{x-(i-1)h_\ell}{2h_\ell}\right)\quad x\in \mathbb{R},
\end{equation}
where
\begin{equation}\label{eq:defg}
	g(x) = \begin{cases}
		2x, \quad &x \in \left[0,\frac{1}{2}\right], \\
		2(1-x), \quad &x \in \left(\frac{1}{2},1\right], \\
		0 , \quad &\text{others}.
	\end{cases}
\end{equation}
By definition, we have $\phi_{1,1}(x) = g(x)$. In addition, for $\ell=0$, 
we have these two basis functions,
\begin{equation}\label{key}
	\phi_{0,0} = 1- x \quad \text{and} \quad \phi_{0,1} = x, \quad \text{for } x \in [0,1].
\end{equation}
These basis functions are used to define the piecewise linear function space
\begin{equation}
	V_\ell :=\mbox{span}\{\phi_{\ell,i}: 0 \le i\le 2^\ell\} 
	\subset H^1(0,1), \quad \ell \ge 0.
\end{equation}
Then, let us define the the piecewise linear interpolation on $\mathcal T_\ell$ as:
\begin{equation}\label{eq:inter_I}
	I_\ell u = \sum_{i=0}^{2^\ell}u(x_{\ell,i}) \phi_{\ell,i}. 
\end{equation}
Thus, we have the next hierarchical decomposition for $L \ge 1$:
\begin{equation}
	I_L u =I_0 u +  \sum_{\ell=1}^L  (I_\ell - I_{\ell-1})u =I_0 u +  \sum_{\ell=1}^L \sum_{i\in \mathcal I_\ell}\mu_{\ell,i}\phi_{\ell,i},
\end{equation}
where
\begin{equation}
	\mathcal I_\ell = \{i\in \mathbb{N}~:~1\le i\le 2^\ell-1, ~i ~\text{ is odd} \}.
\end{equation}
Furthermore, we have 
\begin{equation}\label{key}
	(I_\ell - I_{\ell-1})u = \sum_{i\in \mathcal I_\ell}\mu_{\ell,i}\phi_{\ell,i} = \sum_{i\in \mathcal I_\ell} \left( u(x_{\ell,i}) - \frac{1}{2}\left( u(x_{\ell,i-1})  + u(x_{\ell,i+1})  \right) \right)\phi_{\ell,i}.
\end{equation}
The key observation is that there exists a special decomposition form once we take $u(x) = s(x) = x^2$. 
By the hierarchical decomposition above, we have
\begin{equation}
	\begin{aligned}
		(I_{\ell} - I_{\ell-1})s(x) &= \sum_{i\in \mathcal I_\ell}\left( s(x_{\ell,i}) - \frac{1}{2}\left( s(x_{\ell,i-1})  + s(x_{\ell,i+1})  \right) \right)\phi_{\ell,i}(x)\\
		&=\sum_{i\in \mathcal I_\ell}\left(x_{\ell,i}^2 - \frac{1}{2}(x_{\ell,i-1}^2 + x_{\ell,i+1}^2)\right)\phi_{\ell,i}(x)\\
		&= \sum_{i\in \mathcal I_\ell}\left(x_{\ell,i}^2 - \frac{1}{2}\left((x_{\ell,i}-h_\ell)^2 + (x_{\ell,i}+h_\ell)^2\right) \right)\phi_{\ell,i}(x) \\
		&=-h_\ell^2\sum_{i\in \mathcal I_\ell}\phi_{\ell,i}(x).
	\end{aligned}
\end{equation}
Given that $I_0 s = s(0)\phi_{0,0} + s(1)\phi_{0,1} = x$ on $[0,1]$, we have
\begin{equation}
	I_L s(x) = I_0 s(x) + \sum_{\ell=1}^L  (I_\ell - I_{\ell-1})s(x) = x -\sum_{\ell=1}^\ell h^2_\ell \sum_{i\in \mathcal I_\ell}\phi_{\ell,i}(x) = x -\sum_{\ell=1}^\ell h^2_\ell g_\ell(x),
\end{equation}
where
\begin{equation}\label{key}
	g_\ell(x) := \sum_{i\in \mathcal I_\ell}\phi_{\ell,i}(x).
\end{equation}
By definition, we have the following diagram of $g_\ell(x)$:
\begin{figure}[h!]
	\centering
	\includegraphics[width=0.6\textwidth]{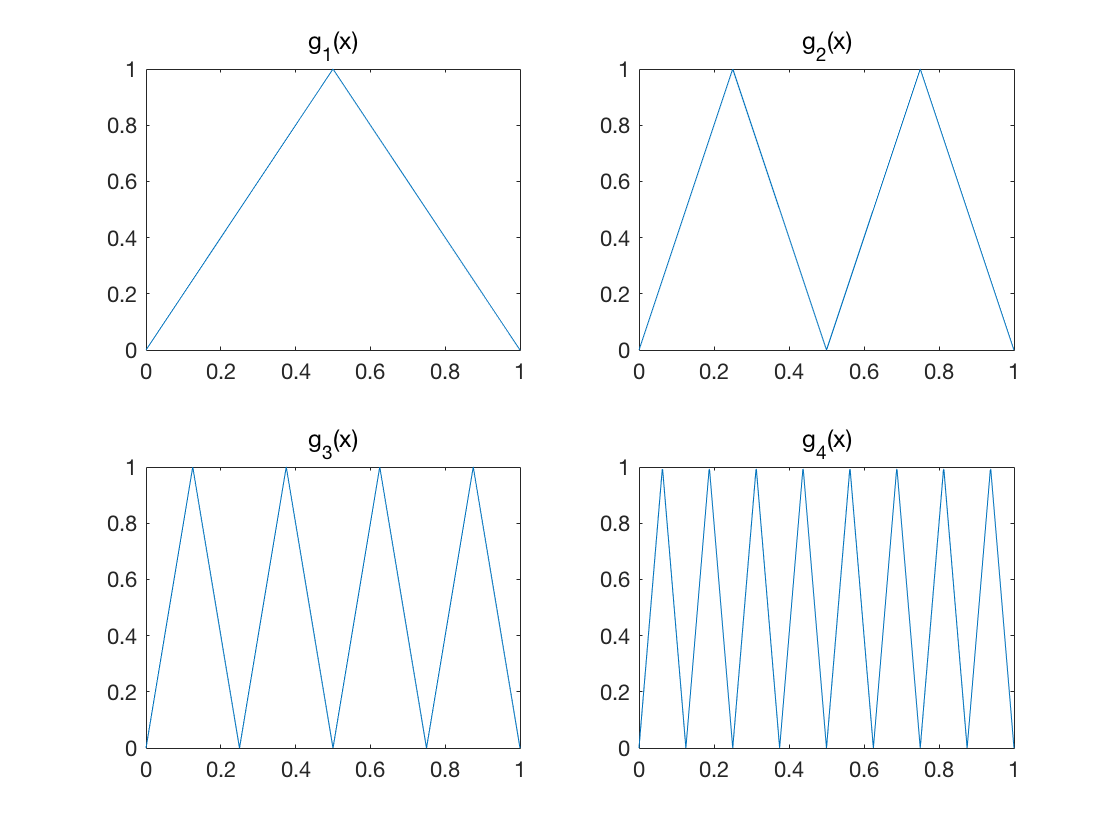}
	\caption{Representations of $g_\ell(x)$ for $\ell=1,2,3,4$.}
	\label{fig:gl}
\end{figure}

The next property plays a crucial role in the connections between ReLU DNNs and the hierarchical basis functions:
\begin{equation}\label{func:glinduction}
	g_\ell(x)
	= g(g_{\ell-1}(x))
\end{equation}
for $\ell=2,...,L$ where $g_1(x) = g(x)$ as defined in~\eqref{eq:defg}.
Based on the connections between the linear finite element functions and the ReLU DNNs as in \cite{he2020relu}, 
we actually have
\begin{equation}\label{def_g}
	g(x) = 2\relu(x) - 4\relu({x-\frac{1}{2}}) + 2\relu(x - 1) \in {{\mathcal N}}_{1}^{3}
\end{equation}
for any $x \in \mathbb{R}$. 
Thus, we have the next key observation,
\begin{equation}
	g_\ell \in {\mathcal N}_\ell^{3},
\end{equation}
because of the composition structure.
Based on the definition of $\widehat{{\mathcal N}}_{\ell}^{3}$, the above relation leads to 
\begin{equation}
	x -\sum_{\ell=1}^L h^2_\ell g_\ell(x) \in \widehat{{\mathcal N}}_{L}^{3}.
\end{equation}
Before we show the approximation result for $s(x)=x^2$ by using an ReLU DNN, 
we recall the interpolation error estimates of $I_L s$ for the $L^{\infty}$ and $W^{1,\infty}$ norms.
\begin{lemma}
	If $s(x) = x^2$, then we have the next approximation results for $I_L$
	\begin{equation}
		\| s -  I_L s \|_{L^{\infty}([0,1])} = 2^{-2(L+1)} \quad \text{and} \quad | s -  I_L s |_{W^{1,\infty}(0,1)} \le 2^{-L}.
	\end{equation}
	
\end{lemma}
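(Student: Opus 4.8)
The plan is to localize the estimate to a single mesh element and to exploit that $s''\equiv 2$ is constant, so that every element contributes an identical error profile. First I would fix an element $[x_{L,i},x_{L,i+1}]$ of length $h_L=2^{-L}$ and write down the local linear interpolant explicitly: since $I_L s$ interpolates $s$ at the two endpoints, on this element $I_L s$ is the affine function through $(x_{L,i},x_{L,i}^2)$ and $(x_{L,i+1},x_{L,i+1}^2)$, whose slope is $x_{L,i}+x_{L,i+1}$. A direct algebraic simplification (equivalently, the standard linear interpolation remainder $\tfrac12 s''(\xi)(x-a)(x-b)$ with $s''=2$) then yields the element-wise error
\begin{equation}\label{key}
	(s-I_L s)(x) = (x-x_{L,i})(x-x_{L,i+1}), \qquad x\in[x_{L,i},x_{L,i+1}].
\end{equation}
The key point is that this expression depends on the element only through a translation, so the error has the same shape on every element.

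For the $L^\infty$ estimate I would note that $(x-x_{L,i})(x-x_{L,i+1})\le 0$ on the element and attains its extreme value at the midpoint, where it equals $-\tfrac14 h_L^2$. Because this value is identical across all elements, the global supremum of $|s-I_L s|$ is exactly $\tfrac14 h_L^2 = \tfrac14\, 2^{-2L}=2^{-2(L+1)}$, which produces the claimed \emph{equality} rather than a mere inequality.

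For the $W^{1,\infty}$ seminorm I would differentiate the local error to get $(s-I_L s)'(x)=2x-(x_{L,i}+x_{L,i+1})$, which is linear in $x$ and has magnitude at most $x_{L,i+1}-x_{L,i}=h_L$ on the element, with the bound approached at the two endpoints. Taking the essential supremum over $(0,1)$ of this piecewise-linear (and only a.e.\ defined) derivative therefore gives $|s-I_L s|_{W^{1,\infty}(0,1)}=2^{-L}$, in particular the stated bound. As an alternative route one could instead use the hierarchical identity $s-I_L s=-\sum_{\ell>L}h_\ell^2\,g_\ell$ established earlier in this section, but the local-element argument is the most transparent.

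The computations here are routine, so there is no genuine obstacle; the only points requiring care are (i) justifying the exact equality in the $L^\infty$ bound, which hinges on the constancy of $s''$ forcing every element to achieve the same maximal deviation, and (ii) handling the derivative only almost everywhere, since the interpolant has kinks at the nodes, so that the $W^{1,\infty}$ seminorm must be read as an essential supremum, under which the endpoint value $h_L$ is still attained in the limit.
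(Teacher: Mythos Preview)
Your argument is correct and is the standard element-wise computation for the linear interpolation error of $x^2$. The paper does not actually prove this lemma: it is stated as a recalled classical result (``we recall the interpolation error estimates\ldots'') with no accompanying proof, so there is nothing to compare against beyond noting that your derivation is exactly the expected one. Your remark on the equality in the $L^\infty$ bound (every element achieves the same maximal deviation because $s''\equiv 2$) and your care with the essential supremum for the $W^{1,\infty}$ seminorm are both appropriate.
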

Based on the lemma above, we will show a new and concise proof of the next theorem.
\begin{theorem}\label{thm:xsquare1}
	For function $s(x) = x^2$, there exists  $\hat s_L(x) \in \widehat{{\mathcal N}}_{L}^{3}$ such that:
	\begin{equation}\label{eq:x2}
		\| \hat s_L - s \|_{L^\infty([-1,1])} = 2^{-2L} \quad \text{and} \quad | \hat s_L -  s |_{W^{1,\infty}(-1,1)} \le 2^{-(L-1)}.
	\end{equation}
\end{theorem}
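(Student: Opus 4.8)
The plan is to reduce the approximation problem on $[-1,1]$ to the interpolation problem on $[0,1]$ that has already been assembled in this section, by means of a single affine change of variables, and then to transport both the network membership and the two error estimates through that change of variables. As the building block I would take the piecewise-linear interpolant $\hat f_L(t) := I_L s(t) = t - \sum_{\ell=1}^{L} h_\ell^2\, g_\ell(t)$ of $s(t)=t^2$ on $[0,1]$, which was shown above to lie in $\widehat{\mathcal N}_L^3$, and which by the preceding Lemma satisfies $\|s - \hat f_L\|_{L^\infty([0,1])} = 2^{-2(L+1)}$ and $|s - \hat f_L|_{W^{1,\infty}(0,1)} \le 2^{-L}$.

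Next I would introduce the affine map $t = \frac{x+1}{2}$, which carries $x\in[-1,1]$ bijectively onto $t\in[0,1]$, together with the identity $x^2 = (2t-1)^2 = 4t^2 - 4t + 1$. This motivates the candidate
\[
\hat s_L(x) := 4\,\hat f_L\!\left(\frac{x+1}{2}\right) - 2(x+1) + 1 .
\]
Since $4t^2 - 4t + 1$ and $4\hat f_L(t) - 4t + 1$ differ exactly by $4\bigl(t^2 - \hat f_L(t)\bigr)$, the function $\hat s_L$ is precisely the linear interpolant of $x^2$ at the $2^L+1$ uniform nodes of $[-1,1]$, and in particular is continuous and piecewise linear there.

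Then I would verify $\hat s_L \in \widehat{\mathcal N}_L^3$. Because $t=\frac{x+1}{2}$ is affine, precomposing the input of $\hat f_L$ with it can be absorbed into each affine map $\widehat\theta^\ell$ without altering any width or the number of hidden layers; here the skip-connection design of \eqref{def:ReLUDNN2}, which routes the raw variable $x$ into every hidden layer and into the output layer, is exactly what lets us synthesize $\frac{x+1}{2}$ wherever it is needed and append the linear correction $-2(x+1)+1$ at the output at no cost, while the output scaling by $4$ is merely a rescaling of $\widehat\theta^{L+1}$. Hence $\hat s_L$ remains in $\widehat{\mathcal N}_L^3$, and the error estimates follow by the chain rule: from $x^2 - \hat s_L(x) = 4\bigl(t^2 - \hat f_L(t)\bigr)$ the $L^\infty$ error is multiplied by $4$, giving $\|s-\hat s_L\|_{L^\infty([-1,1])} = 4\cdot 2^{-2(L+1)} = 2^{-2L}$, and since $\frac{dt}{dx}=\tfrac12$ the derivative of the error equals $2\,\frac{d}{dt}\bigl(t^2-\hat f_L(t)\bigr)$, whence $|s-\hat s_L|_{W^{1,\infty}(-1,1)} = 2\,|s-\hat f_L|_{W^{1,\infty}(0,1)} \le 2\cdot 2^{-L} = 2^{-(L-1)}$.

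The step I expect to demand the most care is the membership verification, namely checking that composing with the affine input map and appending the affine output correction genuinely preserve width $3$ and depth $L$; this is precisely where the particular architecture of $\widehat{\mathcal N}_L^N$ (input fed to all layers, all hidden outputs collected by a single terminal linear map) carries the argument. The algebraic identity $x^2 = 4t^2 - 4t + 1$ and the subsequent transfer of the two error bounds are then routine.
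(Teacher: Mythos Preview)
Your proof is correct, and in fact it produces exactly the same piecewise-linear function $\hat s_L$ as the paper does (the interpolant of $x^2$ at the $2^L+1$ equispaced nodes of $[-1,1]$); the two arguments differ only in how that function is realized inside $\widehat{\mathcal N}_L^3$.

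The paper extends from $[0,1]$ to $[-1,1]$ by the even reflection $x\mapsto|x|=\relu(x)+\relu(-x)$, which consumes one extra hidden layer; to compensate it starts from $I_{L-1}s\in\widehat{\mathcal N}_{L-1}^{3}$ and then composes with $|x|$ to land in $\widehat{\mathcal N}_{L}^{3}$. You instead use the affine change $t=\tfrac{x+1}{2}$ and the identity $x^2=4t^2-4t+1$, which costs no layer because affine precomposition and affine postcorrection are absorbed by the $\widehat\theta^\ell$ and $\widehat\theta^{L+1}$ of the skip-connection architecture; so you can start directly from $I_Ls\in\widehat{\mathcal N}_L^3$. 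Your route makes the membership check completely mechanical and handles both error norms by a one-line scaling computation; the paper's route has the slight conceptual advantage that evenness of $\hat s_L$ is built in, which is convenient when the construction is reused in Section~\ref{sec:expressive}. Either way the bounds and the network class coincide.
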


\begin{proof}
	We first define
	\begin{equation}\label{key}
		s_{L-1}(x) = I_{L-1} s(x) = x - \sum_{l=1}^{L-1}\frac{g_\ell (x)}{2^{2\ell}} \in\widehat{{\mathcal N}}_{L-1}^{3}.
	\end{equation}
	Then, we construct the function 
	\begin{equation}\label{eq:def_fL}
		\hat s_L (x) = s_{L-1}(|x|) = s_{L-1}(\relu(x)+\relu(-x)) \in \widehat{{\mathcal N}}_{L}^{3}.
	\end{equation}
	Thus, we have
	\begin{equation}
		\hat s_L (x) =
		\begin{cases}
			I_{L-1} s(-x), \quad &x \in [-1,0] \\
			I_{L-1} s(x), \quad &x\in(0,1]
		\end{cases},
	\end{equation}
	where $I_{L-1}$ is the uniform interpolation with mesh size 
	$h_{L-1} = (\frac{1}{2})^{L-1}$ on $[0,1]$ as defined in~\eqref{eq:inter_I}.
	According to the construction of $I_{L-1}s(x)$ and the symmetry of $s(x) = x^2$,
	we have
	\begin{equation}
		\sup_{x\in[-1,0]} |s(x) - I_{L-1}s(-x)| = \sup_{x\in [0,1]}|s(x) - I_{L-1}s(x)|,
	\end{equation}
	which leads to
	\begin{equation}\label{key}
		\|s-\hat s_{L}\|_{L^\infty([-1,0])} = ||s-\hat s_{L}||_{L^\infty([0,1])}.
	\end{equation}
	Finally,  we have the next estimate for the $L^{\infty}$ norm:
	\begin{equation}
		\begin{aligned}
			||s-\hat s_{L}||_{L^\infty([-1,1])} &= \max\left\{ \| s-\hat s_{L}\|_{L^\infty([-1,0])}, \| s-\hat s_{L}\|_{L^\infty([0,1])} \right\}   \\
			&= \| s-\hat s_{L}\|_{L^\infty([0,1])} = || s -I_{L-1} s||_{L^\infty([0,1])} = 2^{-2L}.
		\end{aligned}
	\end{equation}
	Following a similar argument, we can get the result for the $W^{1,\infty}(-1,1)$ norm.
\end{proof}
The result for the $L^{\infty}$ norm was introduced in~\cite{yarotsky2017error} and
then developed in~\cite{e2018exponential} for $\widehat{{\mathcal N}}_{L}^{3}$.
Based on the definition of $\hat s_L(x)$ in \eqref{eq:def_fL}, we have
\begin{equation}\label{eq:prop_fL}
	\begin{aligned}
		\hat s_{L}(x) =~&x_{L-1,i}^2\frac{x_{L-1,i}+h_{L-1}-x}{h_{L-1}} +(x_{L-1,i}+h_{L-1})^2\frac{x-x_{L-1,i}}{h_{L-1}}\\
		=~&(2x_{L-1,i} + h_{L-1})x-x_{L-1,i}(x_{L-1,i}+h_{L-1})
	\end{aligned}
\end{equation}
for any $x\in [x_{L-1,i}, x_{L-1,i}+h_{L-1}]$ and $x_{L-1,i}$ (or $-x_{L-1,i}$)
in grid points $\mathcal T_{L-1}$, as defined in~\eqref{def:T_ell}.
That is, $\hat s_L(x)$ equals the uniform interpolation for
$s(x)$ on $[-1,1]$ with mesh size $h_{L-1} = 2^{1-L}$. 
This account, therefore, provides a geometrical interpretation for $\hat s_L(x)$ and a new understanding of this approximation property.

\subsection{Can the super-approximation result of ReLU DNNs for $x^2$ be extended?}
As discussed, the key connection between the ReLU DNN and hierarchical basis method is the special non-local ``basis'' function 
$g_\ell \in {\mathcal N}_L^3$. Here, a natural idea to extend this connection is to study the representation power
of the space spanned by $\{g_\ell\}_{\ell=1}^\infty$ in $H^1(0,1)$.
For example, if 
\begin{equation}\label{key}
	u = \sum_{\ell=1}^\infty \mu_\ell g_\ell,
\end{equation}
then we can apply the following approximation scheme:
\begin{equation}\label{key}
	u \approx \sum_{\ell=1}^L \mu_\ell g_\ell \in \widehat{{\mathcal N}}_{L}^{3}.
\end{equation}
That is, we can achieve some efficient approximation results using
$\widehat{{\mathcal N}}_{L}^{3}$ for any $u\in U$ if $U = {\rm span}\{g_1, g_2, \cdots\}$ forms a nontrivial and rich subset of $H_0^1(0,1)$.

However, the next theorem shows that $u(x)$ can only be a quadratic function if $u(x)$ is spanned by $\{g_\ell\}_{\ell=1}^\infty$ with certain regularity assumption, for example $u(x) \in C^{3}(0,1)$.
\begin{theorem}
	If $u \in C^{3}(0,1)$ and 
	\begin{equation}\label{key}
		u = I_0 u + \sum_{\ell=1}^\infty \sum_{i \in \mathcal I_\ell}\mu_{\ell} \phi_{\ell,i},
	\end{equation}
	then, $u(x)$ must be a quadratic function.
\end{theorem}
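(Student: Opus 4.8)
The plan is to translate the hypothesis into a statement about the hierarchical surpluses of $u$ and then exploit mesh refinement. Since the hierarchical functions form a basis, the coefficient multiplying $\phi_{\ell,i}$ in the expansion of a continuous function is uniquely determined and must equal the surplus
$$
\mu_{\ell,i} = u(x_{\ell,i}) - \frac{1}{2}\bigl(u(x_{\ell,i-1}) + u(x_{\ell,i+1})\bigr) = -\frac{1}{2}\bigl(u(x_{\ell,i}-h_\ell) - 2u(x_{\ell,i}) + u(x_{\ell,i}+h_\ell)\bigr),
$$
which I read as $-\tfrac12$ times a second central difference with step $h_\ell$. The assumption that the coefficient $\mu_\ell$ depends on the level $\ell$ but not on the index $i$ therefore says exactly that, at each fixed level, the second central difference of $u$ takes the same value at every interior odd node.

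First I would substitute the third-order Taylor expansion of $u$ (this is where the $C^3$ regularity enters) at $x_{\ell,i}$ into this second difference to obtain
$$
\mu_{\ell,i} = -\frac{h_\ell^2}{2}\,u''(x_{\ell,i}) + R_{\ell,i}, \qquad |R_{\ell,i}| \le \frac{h_\ell^3}{6}\,\|u'''\|_{L^\infty(0,1)}.
$$
Dividing by $-h_\ell^2/2$ and using that $-2\mu_\ell/h_\ell^2 =: c_\ell$ is independent of $i$, this rearranges to $\bigl|u''(x_{\ell,i}) - c_\ell\bigr| \le \tfrac{h_\ell}{3}\|u'''\|_{L^\infty(0,1)}$, so that for any two odd indices $i,j \in \mathcal I_\ell$ at the same level
$$
\bigl|u''(x_{\ell,i}) - u''(x_{\ell,j})\bigr| \le \frac{2h_\ell}{3}\,\|u'''\|_{L^\infty(0,1)}.
$$

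Next I would let the mesh refine. Fix arbitrary $a,b\in(0,1)$; since the odd nodes at level $\ell$ are $2h_\ell$ apart and fill $[h_\ell,1-h_\ell]$, for all large $\ell$ I can pick odd indices with $x_{\ell,i_\ell}\to a$ and $x_{\ell,j_\ell}\to b$. Passing to the limit in the last inequality and invoking continuity of $u''$ yields $u''(a)=u''(b)$. Hence $u''$ is constant on $(0,1)$, and integrating twice shows that $u$ is a polynomial of degree at most two; the coarse term $I_0 u$ only adds an affine contribution, consistent with the prototype $s(x)=x^2$ whose surpluses were already computed to be the constant $-h_\ell^2$.

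The crux — and the only place where genuine work is needed — is the quantitative estimate in the second step: the $i$-independence of the surplus forces the nodal values of $u''$ at a given level to coincide up to an $O(h_\ell)$ error, and $C^3$ regularity is precisely what makes the second-difference remainder $O(h_\ell^3)$, so that this error survives division by $h_\ell^2$ and still tends to $0$ under refinement. (With merely $C^2$ the same conclusion follows by replacing the explicit remainder bound with the modulus of continuity of $u''$.) The remaining ingredients, namely the uniqueness of the hierarchical coefficients and the density of the odd nodes in $(0,1)$, are routine.
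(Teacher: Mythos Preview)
Your argument is correct but follows a different path from the paper. The paper argues by contradiction through an integral representation: after normalizing so that $u(0)=u(1)=0$, it invokes the $H^1$-orthogonality of the hierarchical hats to write each coefficient (up to a level-dependent factor) as $\mu_\ell = -\int_0^1 u''(x)\,\phi_{\ell,i}(x)\,dx$; then, assuming $u'''(x_0)>0$, it chooses a fine level $\ell$ and an odd index $i$ so that both $\phi_{\ell,i}$ and its neighbour $\phi_{\ell,i+2}$ are supported in a region where $u''$ is strictly increasing, which forces the two integrals to be strictly ordered --- contradicting that both equal $\mu_\ell$. Your route avoids orthogonality and integration by parts entirely, is direct rather than by contradiction, and is quantitative; it also extends at once to $C^2$, as you observe, since the second difference equals $h_\ell^2 u''(\xi)$ exactly for some intermediate $\xi$. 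The paper's route, by contrast, is purely local and never needs a global bound on $u'''$: it only uses continuity and nonvanishing of $u'''$ at a single point, so it applies verbatim under the hypothesis $u\in C^3(0,1)$, whereas your bound $\|u'''\|_{L^\infty(0,1)}$ should, strictly speaking, be replaced by the supremum over a fixed compact subinterval containing $a$ and $b$.
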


\begin{proof}
	Without loss of generality, let us consider $u(0) = u(1) = 0$. 
	Otherwise, we can subtract the linear interpolation  of the original function with boundary values.
	That is, we have
	\begin{equation}\label{key}
		u = \sum_{\ell=1}^\infty \sum_{i \in \mathcal I_\ell}\mu_{\ell} \phi_{\ell,i}.
	\end{equation}
	According to the $H^1$ orthogonality of $\phi_{\ell,i}$, we can get
	\begin{equation}
		\mu_{\ell}=\int_0^1u'(x)\phi_{\ell,i}(x)dx = \int_0^1-u''(x)\phi_{\ell,i}(x)dx, \text{ for all } \ell \ge 1, i \in \mathcal I_\ell.
	\end{equation}
	Now, let us assume that $u\in C^{3}[0,1]$ is not a quadratic function.
	Thus, there exists $x_0 \in(0,1)$, which satisfies $u'''(x_0)\neq 0$. 
	Without loss of generality, we assume that $u'''(x_0) > 0$, which means 
	that we can find a small interval $(a,b) \subset (0,1)$ such that $u'''(x)>0$ for all $x\in (a,b)$.  
	We can, therefore, find $\ell \ge 1$ large enough and $i\in \mathcal I_\ell$ such that
	$(x_{\ell,i}-h_\ell, x_{\ell,i}+3h_\ell) \subset (a,b)$. 
	This means that $u''(x)$ is monotonically increasing on $(x_{\ell,i}-h_\ell, x_{\ell,i}+3h_\ell)$, such that
	\begin{equation}\label{key}
		\begin{aligned}
			u_\ell &= \int_0^1-u''(x)\phi_{\ell,i}(x)dx \\
			&> \int_0^1-u''(x+2h_\ell)\phi_{\ell,i}(x)dx \\
			&= \int_0^1-u''(x)\phi_{\ell,i+2}(x)dx = u_\ell,
		\end{aligned}
	\end{equation}
	which is a contradiction. Thus, we have $u'''(x) = 0$ on $(0,1)$, which means $u(x)$ 
	has to be a quadratic function.
\end{proof}

\section{ReLU DNN and hierarchical basis for the function $xy$}\label{sec:expressive}
In this section, we will first show a hierarchical basis interpretation for 
the approximation property of ReLU DNN to function as $m(x,y) = xy$ on $[-1,1]^2$, 
which plays a critical role in approximating polynomials by using ReLU DNNs.
Then, we will present the approximation properties of $\widehat{{\mathcal N}}_{L}^{N} $ 
for polynomials on $\mathbb R^d$.

\subsection{ReLU DNN approximation of $xy$: A hierarchical basis interpretation}
A critical step in establishing the connection of polynomials and ReLU DNN is
to approximate $m(x,y) = xy$ in $[-1,1]^2$.
A commonly used approach to achieving this approximation is to
combine  the following algebraic formula
\begin{equation}\label{key}
	m(x,y) := xy = 2\left( \left( \frac{x+y}{2}\right)^2 - \left(\frac{x}{2}\right)^2 - \left( \frac{y}{2}\right)^2 \right)
\end{equation}
with Theorem~\ref{thm:xsquare1}, as in
~\cite{yarotsky2017error,e2018exponential,montanelli2019deep}. 
More precisely, this method approximates $m(x,y)$ by $m_L(x,y)$, which is defined as
\begin{equation}\label{eq:def_phiL}
	\begin{split}
		m_L(x,y) &:= 2\left(\widehat s_L\left(\frac{x+y}{2}\right)-\widehat s_L\left(\frac{x}{2}\right)-\widehat s_L\left(\frac{y}{2}\right)\right)\\
		&= \left(|x+y|-|x|-|y|\right)+2\sum_{\ell = 1}^{L-1}h^2_\ell\left(g_\ell\left(\frac{|x|}{2}\right)+g_\ell\left(\frac{|y|}{2}\right)-g_\ell\left(\frac{|x+y|}{2}\right)\right) \\
		&\in \widehat{{\mathcal N}}_{3L}^3,
	\end{split}
\end{equation}
where $\widehat s_L$ is the approximation of $s(x) = x^2$ in Theorem~\ref{thm:xsquare1}.
The following lemma shows the approximation based on this construction.
\begin{lemma}\label{lemm:varphi_L}
	For $m_L(x,y)$ defined in \eqref{eq:def_phiL}, the next error estimate holds that
	\begin{equation}\label{key}
		\| m - m_L \|_{L^{\infty}([-1,1]^2)} \le 6 \times 2^{-2L} .
	\end{equation}
\end{lemma}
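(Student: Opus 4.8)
The plan is to reduce this two-variable estimate directly to the one-variable approximation bound for $s(x)=x^2$ supplied by Theorem~\ref{thm:xsquare1}, using the fact that $m_L$ is constructed by replacing every square in the exact polarization identity $m(x,y)=2\big((\tfrac{x+y}{2})^2-(\tfrac{x}{2})^2-(\tfrac{y}{2})^2\big)$ by its network approximant $\widehat s_L$. First I would introduce the pointwise one-dimensional error $e(t):=s(t)-\widehat s_L(t)$ and subtract the identity from the definition \eqref{eq:def_phiL} of $m_L$. Because the same coefficients and arguments appear in both expressions, the difference collapses with no cross terms into
\[
m(x,y)-m_L(x,y)=2\,e\!\big(\tfrac{x+y}{2}\big)-2\,e\!\big(\tfrac{x}{2}\big)-2\,e\!\big(\tfrac{y}{2}\big).
\]

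Next I would apply the triangle inequality to obtain the pointwise bound $|m(x,y)-m_L(x,y)|\le 2|e(\tfrac{x+y}{2})|+2|e(\tfrac{x}{2})|+2|e(\tfrac{y}{2})|$. The one place that needs a small check is that each of the three scaled arguments stays inside $[-1,1]$, the interval on which Theorem~\ref{thm:xsquare1} controls $e$: for $(x,y)\in[-1,1]^2$ we have $\tfrac{x}{2},\tfrac{y}{2}\in[-\tfrac12,\tfrac12]$ and $\tfrac{x+y}{2}\in[-1,1]$, so all three lie in the admissible range. I would then invoke the uniform estimate $\|e\|_{L^\infty([-1,1])}=\|s-\widehat s_L\|_{L^\infty([-1,1])}=2^{-2L}$ from Theorem~\ref{thm:xsquare1} for each term, giving $|m(x,y)-m_L(x,y)|\le 3\cdot 2\cdot 2^{-2L}=6\cdot 2^{-2L}$, and finally take the supremum over $[-1,1]^2$ to conclude $\|m-m_L\|_{L^\infty([-1,1]^2)}\le 6\times 2^{-2L}$.

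As for the main obstacle: there is essentially no analytic difficulty here, precisely because the polarization identity makes the two-variable error split exactly into three independent one-variable errors. The only genuine points of care are the domain-containment check for the scaled arguments (so that Theorem~\ref{thm:xsquare1} applies on the full range rather than a subinterval) and the bookkeeping of the constant, where the factor $6$ arises transparently as three terms each carrying the factor $2$ from the identity. I would expect the verification of the argument ranges to be the only step worth stating explicitly.
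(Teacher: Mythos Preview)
Your argument is correct and is exactly the standard elementary derivation of this bound: rewrite $m-m_L$ via the polarization identity as three copies of the one-dimensional error $s-\widehat s_L$, check that the arguments $\tfrac{x}{2},\tfrac{y}{2},\tfrac{x+y}{2}$ all lie in $[-1,1]$, and apply Theorem~\ref{thm:xsquare1} termwise. Nothing is missing.

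It is worth noting, however, that the paper does not actually supply a proof of Lemma~\ref{lemm:varphi_L}; it states this $6\times 2^{-2L}$ estimate as the commonly used result from the literature, precisely as a foil for the sharper result that follows. The paper's own contribution is Theorem~\ref{them:xy}, which identifies $m_L$ with the piecewise-linear nodal interpolant $\Pi_{L-2}m$ on the uniform triangular mesh, and then Lemma~\ref{lem:Pim-m}, which computes the interpolation error exactly element by element. That route yields the equality $\|m-m_L\|_{L^\infty([-1,1]^2)}=2^{-2(L-1)}=4\times 2^{-2L}$, improving your constant from $6$ to $4$ and showing it is attained. So your triangle-inequality proof is the right way to establish Lemma~\ref{lemm:varphi_L} as stated, but the paper's geometric approach buys both a tighter constant and the equality, at the cost of first proving the nontrivial identification $m_{L+2}=\Pi_L m$.
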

Here, we try to keep the hierarchical decomposition perspective 
to establish a new geometrical understanding for $m_L(x,y)$.
Similar to our presentation of the previous derivation for $s(x)=x^2$, 
we will investigate the hierarchical basis
approximation for $m(x,y) = xy$.
However, unlike the uniform interpolation $I_L$ for $s(x)=x^2$ defined
on $[0,1]$, here we consider the uniform grid interpolation $\Pi_\ell$ for
$m(x,y)$ on $[-1,1]^2$ with mesh size $h_\ell = 2^{-\ell}$ directly. 
In order to differ from notations on 1D, let us define these 2D grid points with the multi-scales
\begin{equation}\label{key}
	\mathcal T^2_\ell := \left\{ \left. (x_{\ell,i}, y_{\ell,j})~\right|~ x_{\ell,i} = (i-2^\ell)h_\ell, y_{\ell,j} = (j-2^\ell)h_\ell, i,j = 0,1,\cdots,2^{\ell+1}\right\}
\end{equation}
for all $\ell \ge 0$. Then for any grid point $(x_{\ell,i}, y_{\ell,j}) \in \mathcal T^2_\ell$ satisfying $i,j\neq 2^{\ell+1}$, 
let us define the upper-right triangle element in terms of grid point $(x_{\ell,i}, y_{\ell,j})$ as
\begin{equation}\label{key}
	T^+_{\ell;i,j} = \left\{ \left. (x,y) \in [x_{\ell,i},x_{\ell,i}+h_\ell]\times[y_{\ell, j},y_{\ell,j}+h_\ell] ~ \right|~ x+y\le x_{\ell,i}+y_{\ell,j}+h_\ell \right\}
\end{equation}
for all $\ell \ge 0$.
Correspondingly, for $i,j\neq 0$ and $\ell \ge 0$, we define the lower-left triangle element in terms of grid point $(x_{\ell,i}, y_{\ell,j})$ as
\begin{equation}\label{key}
	T^-_{\ell;i,j} = \left\{ \left. (x,y) \in [x_{\ell,i}-h_\ell,x_{\ell,i}]\times[y_{\ell, j}-h_\ell,y_{\ell,j}] ~ \right|~ x+y \ge x_{\ell,i}+y_{\ell,j}-h_\ell \right\}.
\end{equation}
According to the representation of the barycentric coordinates, we have the following
interpolation function on each element (see Fig.~\ref{fig:xyelement}):
\begin{equation}\label{eq:def_Pi_L}
	\begin{split}
		\Pi_\ell m(x,y) = &   (x_{\ell,i}+h_\ell)y_{\ell,j}\frac{(x-x_{\ell,i})h_\ell/2}{h_\ell^2/2}+x_{\ell,i}(y_{\ell,j}+h_\ell)\frac{(y-y_{\ell,j})h_\ell/2}{h_\ell^2/2}\\
		&+x_{\ell,i}y_{\ell,j} \frac{h_\ell^2/2-(x-x_{\ell,i})h_\ell/2-(y-y_{\ell,j})h_\ell/2}{h_\ell^2/2}\\
		= & xy_{\ell,j}+yx_{\ell,i}-x_{\ell,i}y_{\ell,j}, \quad (x,y) \in T^+_{\ell;i,j} \cup T^{-}_{\ell;i,j}
	\end{split}
\end{equation}
for all $T^+_{\ell;i,j} \cup T^{-}_{\ell;i,j} \subseteq [-1,1]^2$.

\begin{figure}[h!]
	\centering
	\includegraphics{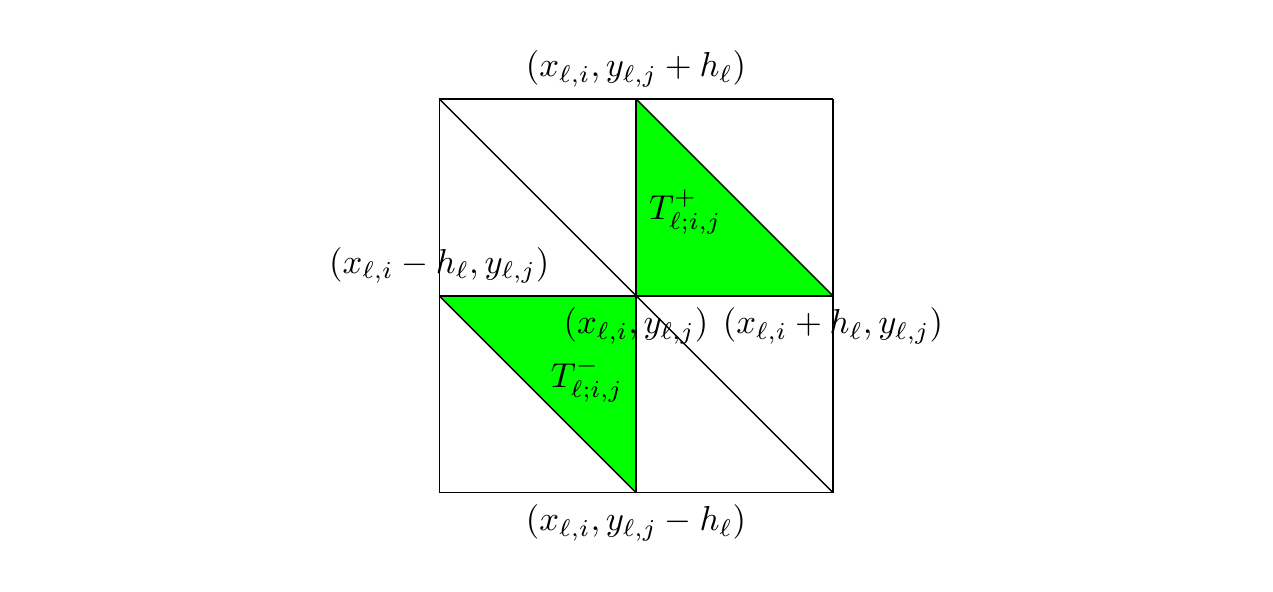}
	\caption{An example of element interpolation on $T^+_{\ell;i,j}\cup T^-_{\ell;i,j}$.}
	\label{fig:xyelement}
\end{figure}


The following theorem describes the connection between $\Pi_Lm$ 
and $m_L$, which gives the geometrical interpretation of $m_L$. 

\begin{theorem}\label{them:xy} For $m(x,y) = xy$, the following identity for 
	the uniform interpolation $\Pi_Lm$ in \eqref{eq:def_Pi_L} and 
	the commonly used approximation scheme $m_{L}$ in \eqref{eq:def_phiL} holds:
	\begin{equation}
		\Pi_Lm (x, y) = m_{L+2}(x,y), \quad (x,y)\in[-1,1]^2.
	\end{equation}
\end{theorem}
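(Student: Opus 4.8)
The plan is to exploit the fact that both sides of the claimed identity are continuous piecewise linear (CPWL) functions on $[-1,1]^2$, and to show they live in the same CPWL space and carry the same nodal values; uniqueness of the nodal interpolant then forces them to coincide. By construction in \eqref{eq:def_Pi_L}, $\Pi_L m$ is the nodal interpolant of $m(x,y)=xy$ with respect to the triangulation of $\mathcal T^2_L$ whose cells are the squares $[x_{L,i},x_{L,i}+h_L]\times[y_{L,j},y_{L,j}+h_L]$, each split along the slope-$(-1)$ diagonal into $T^+_{L;i,j}$ and $T^-_{L;i,j}$. So the whole argument reduces to proving (i) that $m_{L+2}$ is affine on each such triangle, and (ii) that $m_{L+2}$ reproduces $xy$ exactly at every node of $\mathcal T^2_L$.

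For step (i), I would first recall from Theorem~\ref{thm:xsquare1} and the discussion after it that $\widehat s_{L+2}$ is exactly the uniform piecewise linear interpolant of $t^2$ on $[-1,1]$ with mesh size $h_{(L+2)-1}=h_{L+1}=2^{-(L+1)}$, i.e.\ with kinks precisely at the nodes $t=k\,2^{-(L+1)}$. Composing with the affine, order-preserving maps appearing in \eqref{eq:def_phiL}, the three terms of $m_{L+2}$ have kinks on the following line families: $\widehat s_{L+2}(x/2)$ only on $\{x=k h_L\}$, $\widehat s_{L+2}(y/2)$ only on $\{y=k h_L\}$, and $\widehat s_{L+2}((x+y)/2)$ only on $\{x+y=k h_L\}$, since $x/2=k\,2^{-(L+1)}$ is equivalent to $x=k h_L$, and likewise for the other two. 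These three families are exactly the vertical, horizontal, and slope-$(-1)$ diagonal edges of the triangulation of $\mathcal T^2_L$; the diagonal direction matches precisely the hypotenuse $x+y=x_{L,i}+y_{L,j}+h_L$ used to define $T^+_{L;i,j}$. Consequently each triangle $T^{\pm}_{L;i,j}$ lies inside a single vertical strip, a single horizontal strip, and on one side of a single diagonal, so all three terms are affine there and $m_{L+2}$ is affine on each triangle, and globally continuous; hence $m_{L+2}$ belongs to the same CPWL space as $\Pi_L m$.

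For step (ii), at any grid point $(x_{L,i},y_{L,j})$ the three arguments $x_{L,i}/2$, $y_{L,j}/2$, and $(x_{L,i}+y_{L,j})/2$ are all integer multiples of $h_L/2=2^{-(L+1)}$, hence exactly nodes of $\widehat s_{L+2}$, where $\widehat s_{L+2}$ reproduces $t^2$. Substituting these exact values into the algebraic identity $xy=2\big((\tfrac{x+y}{2})^2-(\tfrac{x}{2})^2-(\tfrac{y}{2})^2\big)$ underlying \eqref{eq:def_phiL} gives $m_{L+2}(x_{L,i},y_{L,j})=x_{L,i}y_{L,j}=m(x_{L,i},y_{L,j})$. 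Since $\Pi_L m$ is the interpolant of the same $m$ on the same nodes, the two CPWL functions share all nodal values on a common triangulation, so they are identical, which is the assertion.

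The main obstacle is the index bookkeeping hidden in step (i): one must verify that the factor-$\tfrac12$ rescaling of the arguments, which doubles the node spacing in the $x$-, $y$-, and $(x+y)$-variables, combines with the built-in off-by-one in the definition $\widehat s_L(x)=s_{L-1}(|x|)$ to produce exactly the mesh $h_L=2^{-L}$, thereby forcing the shift from $L+2$ to $L$ in the identity. The subtle point to check carefully is that each composed term has no spurious kinks in the interior of a triangle and that the slope-$(-1)$ diagonal of the paper's triangulation is the one that makes the mixed term $\widehat s_{L+2}((x+y)/2)$ affine on each triangle; the other Kuhn triangulation would fail, so this is where the specific geometry of $T^{\pm}_{L;i,j}$ is essential.
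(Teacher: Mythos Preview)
Your argument is correct and is a cleaner repackaging of the paper's own proof rather than a fundamentally different idea. The paper proceeds by direct computation: on each $T^{+}_{L;i,j}$ it observes that $\tfrac{x}{2}$, $\tfrac{y}{2}$, and $\tfrac{x+y}{2}$ each lie in a single mesh interval of $\widehat s_{L+2}$ (this is exactly your step~(i)), then substitutes the explicit affine formula \eqref{eq:prop_fL} for $\widehat s_{L+2}$ on those intervals and simplifies algebraically to recover $y_{L,j}x+x_{L,i}y-x_{L,i}y_{L,j}=\Pi_L m(x,y)$. You instead split the verification into two independent checks---(i) $m_{L+2}$ is affine on every triangle of the mesh, and (ii) $m_{L+2}$ is exact at the nodes---and then invoke uniqueness of the nodal interpolant. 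The underlying geometric observation (kinks of the three summands fall precisely on the vertical, horizontal, and slope~$-1$ edges of $\mathcal T^2_L$) is identical; what you gain is that step~(ii) becomes a one-line consequence of the algebraic identity for $xy$ without any piecewise bookkeeping, and the index-shift from $L$ to $L+2$ is explained transparently by the halving of the arguments. What the paper's approach buys is that it produces the explicit elementwise formula for $m_{L+2}$ directly, which is reused verbatim in later computations, so neither route is strictly superior.
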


\begin{proof}
	Let us compute $m_{L+2}(x,y)$ first. 
	Recall the property of $\widehat s_{L+2}(x)$ in \eqref{eq:prop_fL}. 
	Therefore, for any $x \in [x_{L+1,i}, x_{L+1,i} + h_{L+1}]$ and $(x_{L+1,i}, y_{L+1,j}) \in \mathcal T^2_{L+1}$
	we have
	\begin{equation}\label{eq:interp_fL}
		\widehat s_{L+2}(x) =(2x_{L+1,i} + h_{L+1})x-x_{L+1,i}(x_{L+1,i}+h_{L+1}).
	\end{equation}
	Now, let us consider $(x,y) \in T^+_{L;i,j}$ where for any $i,j$, we have $\frac{x_{L,i}}{2}\le\frac{x}{2}\le\frac{x_{L,i}}{2}+h_{L+1},\frac{y_{L,j}}{2}\le\frac{y}{2}\le\frac{y_{L,j}}{2}+h_{L+1},\frac{x_{L,i}+y_{L,j}}{2}\le\frac{x+y}{2}\le\frac{x_{L,i}+y_{L,j}}{2}+h_{L+1}$. This means that we can apply \eqref{eq:interp_fL} for $\frac{x}{2}$, $\frac{y}{2}$ and $\frac{x+y}{2}$, and get
	\begin{equation}
		\begin{aligned}
			m_{L+2}(x,y) =~&2(\widehat s_{L+2}(\frac{x+y}{2})-\widehat s_{L+2}(\frac{x}{2})-\widehat s_{L+2}(\frac{y}{2}))\\
			=~&2\bigg\{\left((x_{L,i}+y_{L,j}+h_{L+1})\frac{x+y}{2}-\frac{x_{L,i}+y_{L,j}}{2}(\frac{x_{L,i}+y_{L,j}}{2}+h_{L+1})\right)\\
			&-\left((x_{L,i}+h_{L+1})\frac{x}{2}-\frac{x_{L,i}}{2}(\frac{x_{L,i}}{2}+h_{L+1})\right)\\
			&-\left((y_{L,j}+h_{L+1})\frac{y}{2}-\frac{y_{L,j}}{2}(\frac{y_{L,j}}{2}+h_{L+1})\right)\bigg\}\\
			=~&y_{L,j}x+x_{L,i}y-x_{L,i}y_{L,j} \\
			=~&\Pi_Lm(x,y)
		\end{aligned}
	\end{equation}
	for any $(x,y) \in T^+_{L;i,j}$ and $i,j$. With the same argument, we can verify the above relation
	on $T^-_{L;i,j}$ for any $i,j$.
	Thus, $\Pi_L(xy) = m_{L+2}(x,y)$ holds for all $(x,y) \in [-1,1]^2$. 
\end{proof}

This theorem provides a geometrical interpretation of
the commonly used approximation scheme $m_L(x,y)$ in \eqref{eq:def_phiL},
which was originally proposed from the algebraic perspective.

Now let us show the approximation property of ReLU DNN for $m(x,y) = xy$
by utilizing results we have just established. 
Similar to 1D, we present the next lemma for the approximation result of $\Pi_L m$.
\begin{lemma}\label{lem:Pim-m}
	For $m(x,y) = xy$, we have
	\begin{equation}\label{key}
		\|\Pi_L m  - m \|_{L^{\infty}([-1,1]^2)}  = 2^{-2(L+1)}.
	\end{equation}
\end{lemma}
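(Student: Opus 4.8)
The plan is to reduce the global $L^\infty$ estimate to a single-element computation, exploiting the fact that both $m$ and $\Pi_L m$ are given in closed form on every triangle $T^+_{L;i,j} \cup T^-_{L;i,j}$ by \eqref{eq:def_Pi_L}. On an upper-right triangle $T^+_{L;i,j}$ I would introduce local offset coordinates $s = x - x_{L,i}$ and $t = y - y_{L,j}$, so that the triangle is exactly $\{s,t \ge 0,\ s+t \le h_L\}$. Expanding $m(x,y) = (x_{L,i}+s)(y_{L,j}+t)$ and subtracting the interpolant $\Pi_L m = x y_{L,j} + y x_{L,i} - x_{L,i} y_{L,j}$, all terms carrying a factor $x_{L,i}$ or $y_{L,j}$ cancel, leaving the clean identity
\begin{equation}
	m(x,y) - \Pi_L m(x,y) = st, \quad (x,y) \in T^+_{L;i,j}.
\end{equation}
This is the two-dimensional analogue of the one-dimensional cancellation already recorded in \eqref{eq:prop_fL}.

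I would then repeat the identical computation on the lower-left triangle $T^-_{L;i,j}$ using the offsets $s = x_{L,i} - x$ and $t = y_{L,j} - y$, which again lie in $\{s,t \ge 0,\ s+t \le h_L\}$ and again yield $m - \Pi_L m = st$. Since every point of $[-1,1]^2$ belongs to some such triangle, the pointwise error is everywhere nonnegative and equals the product $st$ of the two local offsets, so $|m - \Pi_L m| = st$ uniformly.

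It then remains to maximize $st$ over the triangular region $\{(s,t) : s,t \ge 0,\ s+t \le h_L\}$. By the AM--GM inequality,
\begin{equation}
	st \le \left(\frac{s+t}{2}\right)^2 \le \frac{h_L^2}{4},
\end{equation}
with equality exactly when $s = t = h_L/2$, i.e. at the midpoint of the hypotenuse of each triangle. Substituting $h_L = 2^{-L}$ then gives the upper bound $\|\Pi_L m - m\|_{L^\infty([-1,1]^2)} \le h_L^2/4 = 2^{-2(L+1)}$.

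The computation itself is routine; the only point requiring care is the claim of equality rather than a mere upper bound. For this I would observe that the maximizer $s = t = h_L/2$ is genuinely attained: it lies on at least one element $T^+_{L;i,j}$ (equivalently $T^-_{L;i,j}$) fully contained in $[-1,1]^2$, so the supremum over the whole square coincides with the per-element maximum $h_L^2/4$. Consistently handling the two triangle orientations $T^+$ and $T^-$ is the only additional bookkeeping, and with that the stated equality $\|\Pi_L m - m\|_{L^\infty([-1,1]^2)} = 2^{-2(L+1)}$ follows.
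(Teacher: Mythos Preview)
Your proof is correct and follows essentially the same route as the paper: both compute the pointwise error on each triangle as the product $(x-x_{L,i})(y-y_{L,j})$ (your $st$) and then bound it via AM--GM by $(h_L/2)^2$. You are in fact slightly more thorough than the paper, which does not spell out the attainment of equality or separately verify the sign on the $T^-$ triangles.
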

\begin{proof}
	For any $(x,y)\in T^+_{L;i,j} \cup T^-_{L;i,j}$, we have 
	\begin{equation}
		\begin{split}
			|\Pi_Lm(x,y) - m(x,y)| & = |y_{L,j}x+x_{L,i}y-x_{L,i}y_{L,j}-xy|\\
			&= (x-x_{L,i})(y-y_{L,j})\\
			&\le \left(\frac{x+y-x_{L,i}-y_{L,j}}{2}\right)^2\\
			&\le \left(\frac{h_L}{2}\right)^2=2^{-2(L+1)}.
		\end{split}
	\end{equation}
	Thus, we have 
	\begin{equation}\label{key}
		\|\Pi_L m  - m \|_{L^{\infty}([-1,1]^2)}  = 2^{-2(L+1)}.
	\end{equation}
\end{proof}

Based on the above connection and approximation rate of $\Pi_L m$, we have the next lemma.
\begin{lemma}\label{lem:xy}
	For multiplication function $m(x,y) = xy$ on $[-M,M]^2$, we have $m_L (x,y) \in \widehat{{\rm DNN}}_{3L}^{3}$ such that
	\begin{equation}\label{key}
		\|m_L - m \|_{L^\infty([-M,M]^2)} = M^2 2^{-2(L-1)}. 
	\end{equation}
	In addition, $m_L(x,0) = m_L(0,y) = 0  $.
\end{lemma}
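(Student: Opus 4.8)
The plan is to reduce the statement to the unit-square case already settled by Theorem~\ref{them:xy} and Lemma~\ref{lem:Pim-m}, via a simple rescaling. Writing $\widetilde m_L$ for the approximation on $[-1,1]^2$ from~\eqref{eq:def_phiL}, I would set
\begin{equation}
	m_L(x,y) := M^2\,\widetilde m_L\!\left(\frac{x}{M},\frac{y}{M}\right),\qquad (x,y)\in[-M,M]^2,
\end{equation}
and first check that this stays in the same network class. The input rescaling $(x,y)\mapsto(x/M,y/M)$ is linear and can be absorbed into the input-weights of every affine map $\widehat\theta^\ell$ in~\eqref{def:ReLUDNN2}, while the scalar factor $M^2$ is absorbed into the output map $\widehat\theta^{L+1}$; neither operation changes the depth $3L$ nor the width $3$, so $m_L\in\widehat{{\mathcal N}}_{3L}^{3}$.

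For the error bound I would change variables $u=x/M,\ v=y/M$ and use $xy=M^2uv$ to obtain
\begin{equation}
	\|m_L-m\|_{L^\infty([-M,M]^2)} = M^2\,\|\widetilde m_L-m\|_{L^\infty([-1,1]^2)}.
\end{equation}
Rewriting the identity $\Pi_Lm=m_{L+2}$ of Theorem~\ref{them:xy} with the index shifted by two gives $\widetilde m_L=\Pi_{L-2}m$ on $[-1,1]^2$, and Lemma~\ref{lem:Pim-m} applied at level $L-2$ then yields $\|\widetilde m_L-m\|_{L^\infty([-1,1]^2)}=2^{-2(L-1)}$. Multiplying by $M^2$ produces the claimed equality. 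The one piece of bookkeeping to get right is precisely this index shift: $m_L$ corresponds to $\Pi_{L-2}$, so the exponent is $2(L-1)$ and not $2(L+1)$.

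Finally, for the boundary relations I would substitute directly into the scaled form of~\eqref{eq:def_phiL}. Putting $y=0$ makes the first two terms $\widehat s_L(\tfrac{x}{2M})-\widehat s_L(\tfrac{x}{2M})$ cancel, leaving $m_L(x,0)=-2M^2\,\widehat s_L(0)$, and the construction~\eqref{eq:def_fL} gives $\widehat s_L(0)=s_{L-1}(0)=I_{L-1}s(0)=0$; the case $m_L(0,y)=0$ follows by the symmetry of the formula in $x$ and $y$. Equivalently, $m_L$ is the scaled piecewise-linear interpolant $\Pi_{L-2}(xy)$, and the coordinate axes run along triangulation edges on which $xy$ vanishes at every node, so the interpolant is identically zero there. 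There is no genuine obstacle in this lemma: it is just a rescaling of results already in hand, and the only care needed is the correct depth-to-level index shift.
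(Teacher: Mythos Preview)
Your proposal is correct and follows essentially the same route as the paper: define $m_L$ on $[-M,M]^2$ by the $M^2$-scaled version of the unit-square approximant, identify it with $M^2\,\Pi_{L-2}m(x/M,y/M)$ via Theorem~\ref{them:xy}, and read off the error from Lemma~\ref{lem:Pim-m} at level $L-2$. You are slightly more explicit than the paper in justifying the network-class membership and in verifying the boundary conditions $m_L(x,0)=m_L(0,y)=0$, which the paper states but does not argue in its proof.
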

\begin{proof}
	Let $\widehat s_L \in \widehat{{\rm DNN}}_{L}^{2}$ be the approximation of $s(x) = x^2$ as presented in Theorem~\ref{thm:xsquare1}.
	We, therefore, generalize the previous construct of $m_L$ as
	\begin{equation}\label{eq:phi_L-M}
		\begin{aligned}
			m_L(x,y) &= M^2\left(2 \widehat s_L \left(\frac{x+y}{2M} \right)-2 \widehat s_L\left(\frac{x}{2M}\right)-2 \widehat s_L \left(\frac{y}{2M}\right) \right) \\
			&= M^2 \Pi_{L-2} m(\frac{x}{M},\frac{y}{M})
		\end{aligned}
	\end{equation}
	for all $(x,y) \in [-M,M]^2$.
	Thus, we have $m_L \in \widehat{{\rm DNN}}_{3L}^{3}$ and 
	\begin{equation}\label{key}
		\begin{aligned}
			\|  m_L  - m \|_{L^{\infty}([-M,M]^2)} &= \sup_{(x,y) \in [-M,M]^2} \left| M^2 \Pi_{L-2} m\left(\frac{x}{M},\frac{y}{M}\right) - M^2 m\left(\frac{x}{M},\frac{y}{M}\right)\right| \\
			&= M^2 \sup_{(x,y) \in [-1,1]^2}  \left| \Pi_{L-2} m(x,y)  - m(x,y) \right|  \\
			&= M^2 2^{-2(L-1)}.
		\end{aligned}
	\end{equation}
\end{proof}

\subsection{Approximation property of $\widehat{{\mathcal N}}_{L}^{N}$  for polynomials}
Here, following our previous results for the ReLU DNN approximation of $m(x,y) = xy$ on $[-1,1]^2$
, we derive the super-approximation properties for polynomials by using ReLU DNN.

First, denote
\begin{equation}\label{eq:x^k}
	\bm x^{\bm k} = x_1^{k_1} x_2^{k_2} \cdots x_d^{k_d}
\end{equation}
as a monomial and $|{\bm k}| = \sum_{i=1}^d k_i$, which is also known as the $\ell_1$ norm
of $\bm k$. 
To prove the approximation properties of ReLU DNN for the monomial, we need the next lemma
to estimate the range of $m_L(x,y)$. This range will become the domain for the next hidden layer
for the composition in DNN.

\begin{lemma}\label{lemm:varphi_infty}
	If $|x|,|y|\le M$, we have 
	\begin{equation}
		|m_L(x,y)| = |2M^2(\widehat s_L(\frac{x+y}{2M})-\widehat s_L(\frac{x}{2M})-\widehat s_L(\frac{y}{2M}))|\le M^2.
	\end{equation}
\end{lemma}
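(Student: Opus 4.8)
The plan is to exploit the geometric identity already established in Lemma~\ref{lem:xy}, namely $m_L(x,y) = M^2\,\Pi_{L-2}m\!\left(\frac{x}{M},\frac{y}{M}\right)$ from \eqref{eq:phi_L-M}, which reduces the desired range bound to a bound on the piecewise linear interpolant $\Pi_{L-2}m$ over $[-1,1]^2$. First I would observe that the hypothesis $|x|,|y|\le M$ guarantees $\left(\frac{x}{M},\frac{y}{M}\right)\in[-1,1]^2$, so that the identity applies and $|m_L(x,y)| = M^2\,\bigl|\Pi_{L-2}m\bigl(\tfrac{x}{M},\tfrac{y}{M}\bigr)\bigr|$. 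It therefore suffices to show that $|\Pi_\ell m(u,v)|\le 1$ for every $(u,v)\in[-1,1]^2$ and every level $\ell\ge 0$, and then specialize to $\ell = L-2$.

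For this range estimate on $\Pi_\ell m$, I would use that on each element $T^+_{\ell;i,j}\cup T^-_{\ell;i,j}$ the interpolant $\Pi_\ell m$ is affine, as displayed in \eqref{eq:def_Pi_L}, and that it agrees with $m(u,v)=uv$ at the three vertices of each triangle. Since any point of a triangle is a convex combination of its vertices (its barycentric coordinates are nonnegative and sum to one), and $\Pi_\ell m$ is affine on that triangle, the value $\Pi_\ell m(u,v)$ equals the same convex combination of the three vertex values. Each vertex is a grid point $(x_{\ell,i'},y_{\ell,j'})\in[-1,1]^2$, so its value $x_{\ell,i'}y_{\ell,j'}$ satisfies $|x_{\ell,i'}y_{\ell,j'}|\le 1$; a convex combination of numbers lying in $[-1,1]$ again lies in $[-1,1]$, whence $|\Pi_\ell m(u,v)|\le 1$.

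Combining the two steps yields $|m_L(x,y)| = M^2\,\bigl|\Pi_{L-2}m\bigl(\tfrac{x}{M},\tfrac{y}{M}\bigr)\bigr|\le M^2$, which is the claim. I do not expect a genuine obstacle here: the only point requiring care is the justification that the affine interpolant on each triangle is a convex combination of its vertex values, i.e. that the barycentric coordinates are nonnegative inside the triangle — this is precisely the representation underlying the barycentric formula in \eqref{eq:def_Pi_L}. An alternative, slightly more computational route would bypass the identity and instead bound $\Pi_\ell m(x,y)=xy_{\ell,j}+yx_{\ell,i}-x_{\ell,i}y_{\ell,j}$ directly on each triangle, but the convex-combination argument is cleaner and reuses the interpolation interpretation developed earlier in this section.
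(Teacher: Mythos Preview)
Your proposal is correct and follows essentially the same route as the paper: both invoke the identity $m_L(x,y)=M^2\,\Pi_{L-2}m\bigl(\tfrac{x}{M},\tfrac{y}{M}\bigr)$ from \eqref{eq:phi_L-M} and reduce the claim to $\sup_{[-1,1]^2}|\Pi_{L-2}m|\le 1$. The only difference is that the paper simply asserts this last supremum equals $1$, whereas you supply the (correct) barycentric convex-combination argument to justify the bound; this added detail is fine and does not change the approach.
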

\begin{proof}Based on the relation in~\eqref{eq:phi_L-M}, we have
	\begin{equation}\label{key}
		\begin{aligned}
			\sup_{(x,y) \in [-M,M]^2} |m_L(x,y)| &= M^2\sup_{(x,y) \in [-M,M]^2} \left|  \Pi_{L-2} m(\frac{x}{M},\frac{y}{M}) \right|\\
			&= M^2\sup_{(x,y) \in [-1,1]^2} \left|  \Pi_{L-2} m(x,y) \right|\\
			&= M^2.
		\end{aligned}
	\end{equation}
\end{proof}

\begin{lemma}\label{lemm:relumonomial}
	For any monomial $M_{\bm k}(\bm x) = {\bm x}^{\bm k} = x_1^{k_1} x_2^{k_2} \cdots x_d^{k_d},\bm x\in[-1,1]^d$ 
	with degree $p$, i.e., $|\bm k| = p$, there exists $\widehat M_{\bm k}(\bm x) \in  \widehat{{\mathcal N}}_{3(p-1)L}^{4} $ such that  
	\begin{equation}\label{key}
		\|M_{\bm k}(\bm x) - \widehat M_{\bm k}(\bm x) \|_{L^{\infty}([-1,1]^d)} \le (p-1)\cdot 2^{-2(L-1)}
	\end{equation}
	and $\|\widehat M_{\bm k}(\bm x)\|_{L^{\infty}([-1,1]^d)} \le 1 $.
\end{lemma}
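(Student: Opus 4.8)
The plan is to realize the monomial as an iterated product and to approximate each multiplication by the two-variable network $m_L$ from Lemma~\ref{lem:xy}, chaining $p-1$ such blocks while carrying the running partial product through the skip-connection architecture. First I would relabel the $p$ factors of ${\bm x}^{\bm k}$ (counted with multiplicity) as $z_1,\dots,z_p$, where each $z_j$ is one of the coordinates $x_1,\dots,x_d$ and hence satisfies $|z_j|\le 1$ on $[-1,1]^d$. I then set the exact partial products $P_1=z_1$, $P_j = P_{j-1}z_j$, and define the network approximations recursively by $G_2 = m_L(z_1,z_2)$ and $G_j = m_L(G_{j-1},z_j)$ for $j=3,\dots,p$, so that $\widehat M_{\bm k} := G_p$ is the candidate approximant to $M_{\bm k}={\bm x}^{\bm k}=P_p$.

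Two estimates then drive the argument, both obtained by induction on $j$. For the sup-norm bound, since $|z_j|\le 1$ I would apply Lemma~\ref{lemm:varphi_infty} with $M=1$ to conclude $|G_j|=|m_L(G_{j-1},z_j)|\le 1$ whenever $|G_{j-1}|\le 1$; the base case $|G_2|\le 1$ is the same estimate, giving $\|\widehat M_{\bm k}\|_{L^\infty([-1,1]^d)}\le 1$. For the error bound I would telescope: writing $m(x,y)=xy$ and inserting the intermediate term $m(G_{j-1},z_j)=G_{j-1}z_j$,
\[
|G_j-P_j| \;\le\; |m_L(G_{j-1},z_j)-G_{j-1}z_j| + |z_j|\,|G_{j-1}-P_{j-1}| \;\le\; 2^{-2(L-1)} + |G_{j-1}-P_{j-1}|,
\]
where the first term uses the $M=1$ case of Lemma~\ref{lem:xy} (valid because $(G_{j-1},z_j)\in[-1,1]^2$ by the boundedness just established) and the second uses $|z_j|\le 1$. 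Since $|G_2-P_2|\le 2^{-2(L-1)}$, unrolling the recursion over the $p-1$ multiplications yields $|G_p-P_p|\le (p-1)2^{-2(L-1)}$, which is exactly the claimed accuracy.

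It remains to certify the architecture $\widehat{{\mathcal N}}_{3(p-1)L}^4$. The depth is immediate: each block $m_L$ lives in $\widehat{{\mathcal N}}_{3L}^3$ by Lemma~\ref{lem:xy}, and stacking the $p-1$ blocks adds their depths to give $3(p-1)L$ hidden layers. The width is the delicate point, and I expect it to be the main obstacle: a naive use of the modified-composition rule in Properties~\ref{prop:net class} increases the width by one at every stage and would give width growing like $p$. The key observation is that in each block computing $G_j=m_L(G_{j-1},z_j)$ the second argument $z_j$ is a coordinate of $\bm x$ and is therefore freely available at every layer through the skip connection $[\,\bm x,f^{\ell-1}\,]$, whereas the first argument $G_{j-1}$ is an internally computed quantity that must be propagated across the $3L$ layers of the block. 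Because $m_L$ itself needs width $3$, I would carry $G_{j-1}$ in a single additional channel -- using that $|G_{j-1}|\le 1$, the shifted value $\relu(G_{j-1}+1)=G_{j-1}+1\ge 0$ passes unscathed through $\relu$ and $G_{j-1}$ is recovered by an affine shift when it is fed into the next $\widehat s_L$ sub-block. This raises the per-layer width from $3$ to $4$ but no further, uniformly over all blocks (the first block needing only width $3$), which pins the width at $4$ and completes the identification of the network class.
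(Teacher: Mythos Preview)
Your proposal is correct and follows essentially the same route as the paper. The paper also proceeds by induction on $p$, setting $\widehat M_{\bm k}(\bm x)=m_L(x_i,\widehat M_{\tilde{\bm k}}(\bm x))$ with $\bm k=\tilde{\bm k}+e_i$, and obtains the error bound by exactly the telescoping split you wrote together with Lemma~\ref{lemm:varphi_infty} for the uniform bound $\|\widehat M_{\bm k}\|_{L^\infty}\le 1$. The only presentational difference is in the architecture bookkeeping: the paper simply records $\widehat M_{\bm k}\in\widehat{\mathcal N}_{3pL}^4$ after the construction (implicitly relying on the modified-composition rule in Properties~\ref{prop:net class}), whereas you spell out why the extra scalar $G_{j-1}$ needs one additional channel and propose the shift $\relu(G_{j-1}+1)$ to pass it through $\relu$. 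Your explicit mechanism is a reasonable way to substantiate what the paper leaves implicit; note, however, that to make $G_{j-1}$ available at the first layer of block $j$ you must accumulate it in that extra channel already during block $j-1$ (since in $\widehat{\mathcal N}$ the value $G_{j-1}$ is an affine combination of \emph{all} preceding hidden layers), so a fully rigorous version of your argument should say a word about this incremental accumulation rather than only the carrying step.
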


\begin{proof}
	We establish the proof by induction for $p$. Firstly, for $p=2$ and $M_{\bm k} (\bm x) = x_ix_j$, 
	we have $\widehat M_{\bm k} (\bm x) = m_L(x_i, x_j) \in \widehat{{\mathcal N}}_{3L}^{3} \subset \widehat{{\mathcal N}}_{3L}^{4}$ as presented in Lemma~\ref{lem:xy}.
	By induction, let us prove the case with $p+1$ if $p$ holds. Then, for any $M_{\bm k}(\bm x)$ with $|\bm k| = p+1$, we can find an $M_{\tilde {\bm k}}(\bm x)$ with $\tilde {\bm k}=p$
	and $\bm k = \tilde {\bm k} + e_i$ such that $M_{\tilde {\bm k}}(\bm x) \in \widehat{{\mathcal N}}_{3(p-1)L}^{4}$ with 
	\begin{equation}\label{key}
		\|M_{\bm {\tilde k}}(\bm x) - \widehat M_{\bm {\tilde k}}(\bm x) \|_{L^{\infty}([-1,1]^d)} \le (p-1)\cdot 2^{-2(L-1)}
	\end{equation}
	and $\|\widehat M_{\bm {\tilde k}}(\bm x)\|_{L^{\infty}([-1,1]^d)} \le  1$. Then, we have the next construction for $\widehat M_{\bm { k}}(\bm x)$:
	\begin{equation}\label{eq:cons_M_k}
		\widehat M_{\bm { k}}(\bm x) = m_L(x_i, \widehat M_{\bm {\tilde k}}(\bm x)) \in  \widehat{{\mathcal N}}_{3pL}^{4}.
	\end{equation}
	Then, let us check the approximation property:
	\begin{equation}
		\begin{aligned}
			&\|M_{\bm { k}}(\bm x) - \widehat M_{\bm { k}}(\bm x) \|_{L^{\infty}([-1,1]^d)} \\
			&\le \|m(x_i, M_{\bm {\tilde k}} (\bm x)) - m_L(x_i, \widehat M_{\bm {\tilde k}}(\bm x))\|_{L^{\infty}([-1,1]^d)}  \\
			&\le \|m(x_i, M_{\bm {\tilde k}} (\bm x)) - m(x_i, \widehat M_{\bm {\tilde k}}(\bm x))\|_{L^{\infty}([-1,1]^d)}   + \|m(x_i, \widehat M_{\bm {\tilde k}} (\bm x)) - m_L(x_i, \widehat M_{\bm {\tilde k}}(\bm x))\|_{L^{\infty}([-1,1]^d)}  \\
			&\le \|x_i\|_{L^{\infty}([-1,1]^d)} \|M_{\bm {\tilde k}} (\bm x) - \widehat M_{\bm {\tilde k}} (\bm x)\|_{L^{\infty}([-1,1]^d)}+  2^{-2(L-1)} \\
			&\le (p-1)2^{-2(L-1)} +   2^{-2(L-1)} \\
			&\le p2^{-2(L-1)}.
		\end{aligned}.
	\end{equation}
	Then, following Lemma~\ref{lemm:varphi_infty}, we have $\|\widehat M_{\bm { k}}(\bm x) \|_{L^{\infty}([-1,1]^d)} \le 1$ given the construction in~\eqref{eq:cons_M_k} and the induction that 
	$\|\widehat M_{\bm {\tilde k}}(\bm x)\|_{L^{\infty}([-1,1]^d)} \le  1$.
	
\end{proof}

In the end, we have the next approximation result for polynomials in $\mathbb{R}^d$.
\begin{lemma}\label{lemm:reluPolynomial}
	For any degree-$p$ polynomial $P_p(\bm{x}) = \sum_{|\bm{k}|\le p}a_{\bm k}\bm x^{\bm k},\bm x\in [-1,1]^d$, there exists 
	$\widehat f^L_p \in  \widehat{{\mathcal N}}^4_{3\binom{p+d}{d}(p-1)L}$ such that 
	\begin{equation}\label{lem:poly}
		\|P_p - \widehat f^L_p \|_{L^{\infty}([-1,1]^d)} 
		\le (p-1)\cdot 2^{-2(L-1)}\sum_{|\bm k|\le p}|a_{\bm k}|.
	\end{equation}
\end{lemma}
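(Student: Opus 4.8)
The plan is to build $\widehat f^L_p$ as the exact linear combination of the individual monomial approximants supplied by Lemma~\ref{lemm:relumonomial}, and then to control both the network class and the error term by term. Concretely, for each multi-index $\bm k$ with $2\le|\bm k|\le p$ I would invoke Lemma~\ref{lemm:relumonomial} to obtain $\widehat M_{\bm k}(\bm x)\in\widehat{{\mathcal N}}^4_{3(|\bm k|-1)L}$ with
\begin{equation}\label{key}
	\|\bm x^{\bm k}-\widehat M_{\bm k}(\bm x)\|_{L^\infty([-1,1]^d)}\le(|\bm k|-1)\cdot2^{-2(L-1)}\le(p-1)\cdot2^{-2(L-1)},
\end{equation}
and then set $\widehat f^L_p(\bm x)=\sum_{|\bm k|\le p}a_{\bm k}\,\widehat M_{\bm k}(\bm x)$, where for the degree-$0$ and degree-$1$ indices I simply take $\widehat M_{\bm k}=\bm x^{\bm k}$ (these affine functions lie in every class $\widehat{{\mathcal N}}^4_{L'}$, since the output layer in \eqref{def:ReLUDNN2} has direct access to $x$, and they contribute no approximation error).

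First I would verify the membership $\widehat f^L_p\in\widehat{{\mathcal N}}^4_{3\binom{p+d}{d}(p-1)L}$. Two observations drive this. The class is monotone in depth, $\widehat{{\mathcal N}}^4_{L_1}\subseteq\widehat{{\mathcal N}}^4_{L_2}$ for $L_1\le L_2$ (pad with layers that are ignored by the final affine map), so every $\widehat M_{\bm k}$ can be viewed inside the uniform class $\widehat{{\mathcal N}}^4_{3(p-1)L}$. Scalar multiplication by $a_{\bm k}$ preserves the class because the last map $\widehat\theta^{L+1}$ is affine. Then the Addition property of Properties~\ref{prop:net class} lets me add the terms one at a time, each addition increasing the depth budget by $3(p-1)L$; since the number of monomials of degree at most $p$ in $d$ variables is exactly $\binom{p+d}{d}$, after summing all of them I land in $\widehat{{\mathcal N}}^4_{\binom{p+d}{d}\cdot3(p-1)L}=\widehat{{\mathcal N}}^4_{3\binom{p+d}{d}(p-1)L}$, matching the claim.

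Next I would bound the error by the triangle inequality, distributing over the linear combination:
\begin{equation}\label{key}
	\|P_p-\widehat f^L_p\|_{L^\infty([-1,1]^d)}\le\sum_{|\bm k|\le p}|a_{\bm k}|\,\|\bm x^{\bm k}-\widehat M_{\bm k}(\bm x)\|_{L^\infty([-1,1]^d)}\le(p-1)\cdot2^{-2(L-1)}\sum_{|\bm k|\le p}|a_{\bm k}|,
\end{equation}
where the degree-$\le1$ terms drop out with zero error and the remaining per-term errors are each at most $(p-1)\cdot2^{-2(L-1)}$ by \eqref{key} above. This yields the stated estimate directly.

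I do not expect a serious obstacle here, since the result is essentially an assembly of Lemma~\ref{lemm:relumonomial} through the linear structure of $\widehat{{\mathcal N}}^4_{L}$. The two points requiring genuine care are bookkeeping rather than analysis: first, justifying the depth-monotonicity embedding and the invariance under scalar multiplication so that the heterogeneous monomial networks (of depths $3(|\bm k|-1)L$) can all be placed in one class before applying the Addition property; and second, correctly handling the affine part of $P_p$, i.e.\ the constant and linear monomials, for which Lemma~\ref{lemm:relumonomial} is not stated (its induction begins at $p=2$) but which are represented exactly and therefore tighten rather than threaten the bound. Accounting for these cleanly is what makes the final depth constant $3\binom{p+d}{d}(p-1)L$ and the error factor $(p-1)$ both come out as claimed.
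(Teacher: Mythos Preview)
Your proposal is correct and follows essentially the same approach as the paper, whose proof is simply the one-line remark that the result follows by combining Properties~\ref{prop:net class} (addition) with Lemma~\ref{lemm:relumonomial}. You have faithfully expanded the omitted bookkeeping: the depth-monotonicity embedding, the scalar-multiplication invariance, the handling of the affine monomials via the direct access of the output layer to $x$, and the termwise triangle-inequality estimate are exactly the details implicit in that one line.
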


\begin{proof}
	This can be proven by combining Properties~\ref{prop:net class} and Lemma~\ref{lemm:relumonomial}.
\end{proof} 
The above result plays a fundamental role in many studies~\cite{yarotsky2017error, lu2017expressive, e2018exponential, montanelli2019new, montanelli2019deep, lu2020deep, opschoor2019exponential, opschoor2020deep, guhring2020error, marcati2020exponential}. 
In this paper, however, we provide tighter bounds and more concise proofs for Lemmas \ref{lem:Pim-m}--\ref{lemm:reluPolynomial} by applying the connection of ReLU DNNs and FEM in Theorem~\ref{them:xy}.

\section{Representation of  a 2D linear finite element function with only 2 hidden layers}\label{sec:2DFEMandDNN}
In this section, we will present our unexpected discovery pertaining to the
expressive power of ReLU DNNs, which derived from our investigation into ReLU DNNs 
in relation to hierarchical basis methods.

In terms of the approximation of ReLU DNNs for $x^2$,
the most critical point in establishing Theorem~\ref{thm:xsquare1} is that
\begin{equation}\label{key}
	(I_{\ell} - I_{\ell-1})s(x) = -h_\ell^2\sum_{i\in \mathcal I_\ell}\phi_{\ell,i}(x) = -h_\ell^2g_\ell(x) \in {\mathcal N}_\ell^3
\end{equation}
for all $x\in \mathbb{R}$. Thus, a key question arises:
\begin{quote}
	Is there composition structure for $(\Pi_\ell - \Pi_{\ell-1})m (x, y)$?
\end{quote}

Thanks to Theorem~\ref{them:xy}, we have the following 
corollary for the explicit formula for $(\Pi_\ell- \Pi_{\ell-1})m$ on $[-1,1]^2$.
\begin{corollary}
	For any $(x,y) \in [-1,1]^2$, we have
	\begin{equation}\label{eq:Pi-Pi}
		\begin{aligned}
			&(\Pi_\ell- \Pi_{\ell-1})m (x,y) \\
			&= m_{\ell+2}(x,y) - m_{\ell+1}(x,y)  \\
			&= 2h^2_{\ell+1}\left(g_{\ell+1}\left(\frac{|x|}{2}\right)+g_{\ell+1}\left(\frac{|y|}{2}\right)-g_{\ell+1}\left(\frac{|x+y|}{2}\right)\right).
		\end{aligned}
	\end{equation}
	This means that $\psi_\ell \in \widehat{{\mathcal N}}_{3(\ell+2)}^3$ where $\psi_\ell = (\Pi_\ell- \Pi_{\ell-1})m$. Furthermore, 
	it is worth noting that we can actually have $\psi_\ell \in {\mathcal N}_{\ell+2}^9$ since $g_{\ell+1}(\frac{|x+y|}{2}) \in {\mathcal N}_{\ell+2}^3$.
\end{corollary}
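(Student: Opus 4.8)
The plan is to split the statement into its two displayed identities and its two membership claims, and to handle the identities first since they drive everything else. The first identity $(\Pi_\ell-\Pi_{\ell-1})m = m_{\ell+2}-m_{\ell+1}$ is immediate: Theorem~\ref{them:xy} gives $\Pi_\ell m = m_{\ell+2}$ and $\Pi_{\ell-1}m = m_{\ell+1}$ on $[-1,1]^2$, so I would simply subtract. For the second identity I would substitute the explicit expansion of $m_L$ from \eqref{eq:def_phiL} at the two levels $L=\ell+2$ and $L=\ell+1$ and telescope: the level-independent piece $(|x+y|-|x|-|y|)$ cancels, while the two partial sums $2\sum_{k=1}^{\ell+1}$ and $2\sum_{k=1}^{\ell}$ differ only in the top term $k=\ell+1$, leaving exactly $2h_{\ell+1}^2\bigl(g_{\ell+1}(|x|/2)+g_{\ell+1}(|y|/2)-g_{\ell+1}(|x+y|/2)\bigr)$. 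This is a one-line cancellation rather than a genuine computation.

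For the membership $\psi_\ell\in\widehat{\mathcal N}_{3(\ell+2)}^3$ I would argue term by term. Each summand $g_{\ell+1}(|u|/2)$ with $u\in\{x,y,x+y\}$ is a scalar composition precomposed with an absolute value: using $|u|=\relu(u)+\relu(-u)$, one hidden layer of width $2\le 3$ produces $|u|/2$, and since $g_{\ell+1}$ is the $(\ell+1)$-fold composition of $g$ (by \eqref{func:glinduction}) with $g\in\mathcal N_1^3$ (by \eqref{def_g}), the remaining $\ell+1$ layers of width $3$ realize that composition; hence $g_{\ell+1}(|u|/2)\in\mathcal N_{\ell+2}^3$. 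Embedding $\mathcal N_{\ell+2}^3\subseteq\widehat{\mathcal N}_{\ell+2}^3$ (by zeroing out the skip weights) and applying the addition rule of Properties~\ref{prop:net class} three times then gives $\psi_\ell\in\widehat{\mathcal N}_{3(\ell+2)}^3$. Working from the telescoped single-level formula is precisely what keeps the depth at $3(\ell+2)$; adding $m_{\ell+2}$ and $-m_{\ell+1}$ directly would instead cost $3(\ell+2)+3(\ell+1)$ layers.

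For the sharper claim $\psi_\ell\in\mathcal N_{\ell+2}^9$ I would place the three width-$3$, depth-$(\ell+2)$ plain subnetworks side by side. A single first affine map $\mathbb R^2\to\mathbb R^9$ reads $(x,y)$ into all three branches; the subsequent weight matrices are block-diagonal with three $3\times 3$ blocks so the branches never interact; and the output affine map forms the combination $2h_{\ell+1}^2(\,\cdot+\cdot-\cdot\,)$. This yields total width $3\times 3=9$ and unchanged depth $\ell+2$.

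The main obstacle is not the algebra but the architecture bookkeeping: I must check carefully that precomposing the scalar $(\ell+1)$-fold composition $g_{\ell+1}$ with the absolute value costs exactly one additional layer and never forces a layer wider than $3$, and that the parallel block-diagonal stacking genuinely preserves depth while merely summing the branch widths. Once these two width/depth accountings are pinned down, the identities and both membership claims assemble immediately.
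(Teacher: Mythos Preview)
Your proposal is correct and is exactly the argument the paper intends: the corollary is stated without proof because it follows immediately from Theorem~\ref{them:xy} by subtraction and telescoping in the explicit formula~\eqref{eq:def_phiL}, together with the straightforward width/depth counts you describe for the two membership claims. Your bookkeeping for the depth-$(\ell+2)$, width-$3$ realization of each $g_{\ell+1}(|u|/2)$ and the block-diagonal parallel stacking to reach $\mathcal N_{\ell+2}^9$ is precisely what the paper's terse hint ``since $g_{\ell+1}(|x+y|/2)\in\mathcal N_{\ell+2}^3$'' is pointing at.
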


By choosing a suitable scale, we have $\|h_{\ell}^{-2}(\Pi_\ell- \Pi_{\ell-1})m\|_{L^{\infty}([-1,1]^2)} = 1$. 
Fig.~\ref{fig:T2T3} shows function graphs of $h_2^{-2}(\Pi_2 - \Pi_{1})m$ and $h_{3}^{-2}(\Pi_3- \Pi_{2})m$ on $[0,1]^2$.
\begin{figure}[h!]
	\centering
	\begin{minipage}[t]{0.43\textwidth}
		\centering
		\includegraphics[width=\textwidth]{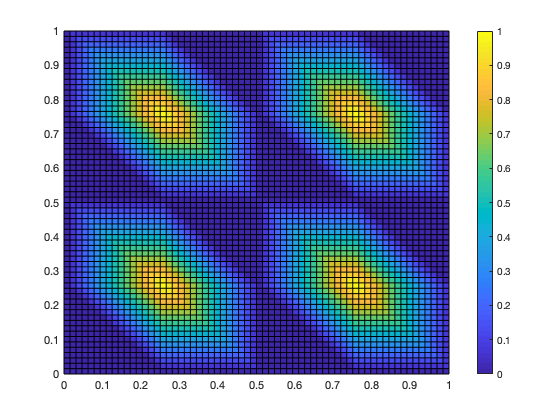}
	\end{minipage}
	\begin{minipage}[t]{0.43\textwidth}
		\centering
		\includegraphics[width=\textwidth]{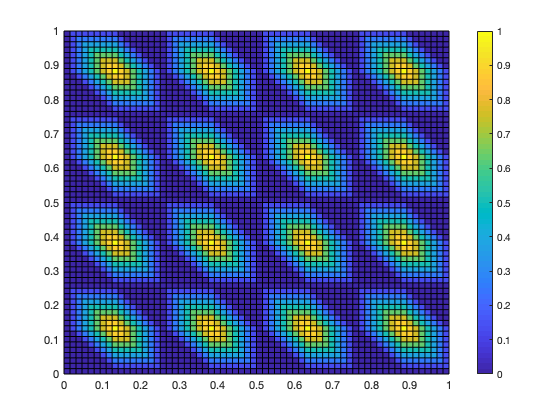}
	\end{minipage}
	\caption{Functions of $h_{\ell}^{-2}(\Pi_\ell- \Pi_{\ell-1})m$ for $\ell = 2$ (left) and $\ell=3$ (right) on $[0,1]^2$.}
	\label{fig:T2T3}
\end{figure}

These graphs and meshes of $(\Pi_\ell - \Pi_{\ell-1})m(x,y)$ give rise to a natural discussion about
the representation theorem of ReLU DNN for the basis function of the 2D linear finite element.
By taking $\ell=1$ with a suitable scale, we have
\begin{equation}\label{eq:varphi_1}
	4(\Pi_1- \Pi_{0})m (x,y) 
	= \frac{1}{2}\left(g_{2}\left(\frac{x}{2}\right)+g_2\left(\frac{y}{2}\right)-g_2\left(\frac{x+y}{2}\right)\right)
\end{equation}
for all $ (x,y) \in [0,1]^2$. 
Here, $4(\Pi_1- \Pi_{0})m (x,y)$ equals the basis function $\varphi(x,y)$ for the 
linear finite element (see left-hand graph of Fig.~\ref{fig:varphi_1}) 
on the uniform mesh on $[0,1]^2$ with mesh size $h_1=\frac{1}{2}$ (see right-hand graph of Fig.~\ref{fig:varphi_1}).
\begin{figure}[h!]
	\begin{minipage}[t]{0.43\linewidth}
		\centering
		\includegraphics[width=\textwidth]{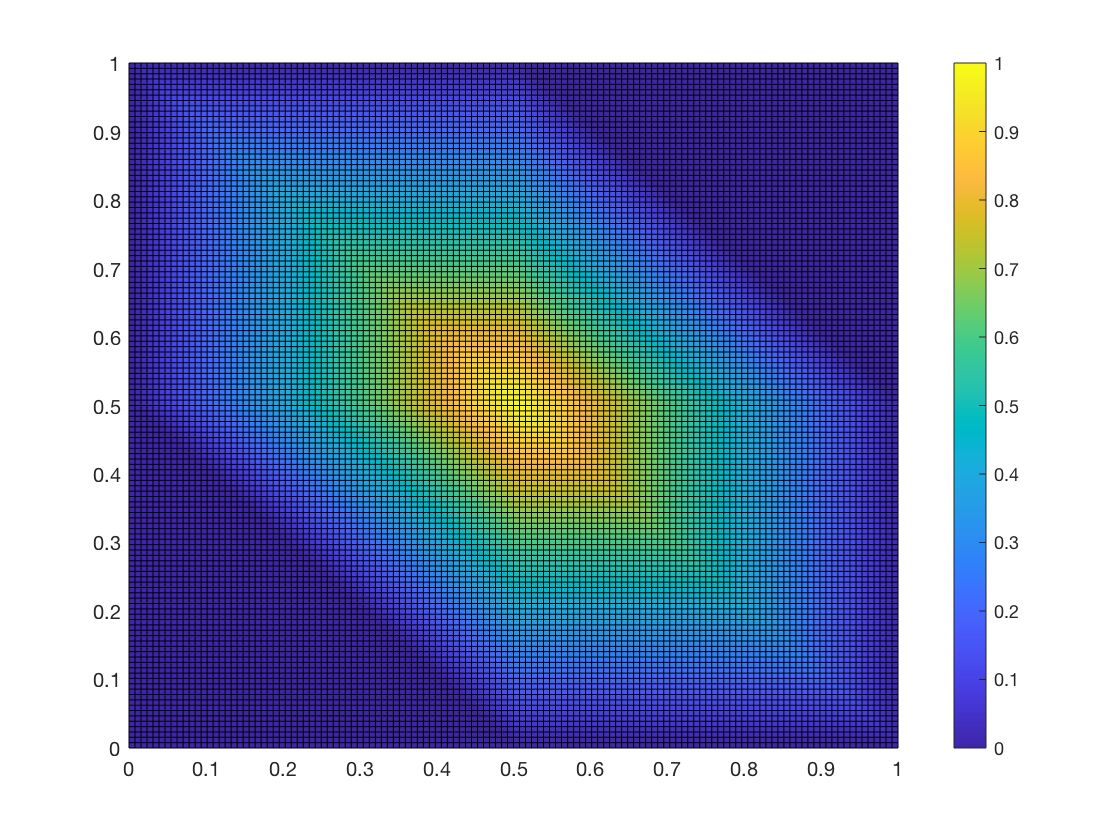}
	\end{minipage}
	\begin{minipage}[t]{0.45\linewidth}
		\centering
		\includegraphics[scale=.5]{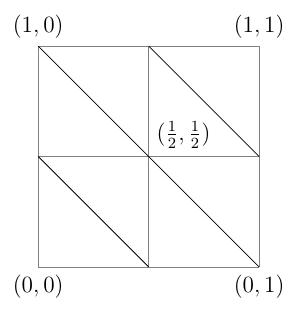}
	\end{minipage}
	\caption{left: $\varphi(x,y)$ on $[0,1]^2$; right:  mesh for $\ell=1$ on $[0,1]^2$.}
	\label{fig:varphi_1}
\end{figure}
Without loss of generality, we can define $\varphi(x,y)$ globally as
\begin{equation}\label{key}
	\varphi(x,y) = \begin{cases}
		4(\Pi_1- \Pi_{0})m (x,y), \quad &(x,y) \in [0,1]^2, \\
		0, \quad &\text{others}.
	\end{cases}
\end{equation}
However, this representation,
\begin{equation}\label{eq:varphi_1-1}
	\varphi(x,y) = \frac{1}{2}\left(g_{2}\left(\frac{x}{2}\right)+g_2\left(\frac{y}{2}\right)-g_2\left(\frac{x+y}{2}\right)\right),
\end{equation}
holds only for $(x,y) \in [0,1]^2$. 
Related to this observation is the simple argument that 
\begin{equation}\label{key}
	\frac{1}{2}\left(g_{2}\left(\frac{x}{2}\right)+g_2\left(\frac{y}{2}\right)-g_2\left(\frac{x+y}{2}\right)\right) = \frac{1}{2} \neq 0
\end{equation}
if $(x,y) = (\frac{3}{2}, \frac{3}{2})$.
On the other hand, on the assumption that the identity in \eqref{eq:varphi_1-1} holds for all $(x,y) \in \mathbb{R}^2$, we can rewrite $g_2(x)$ as
\begin{equation}\label{eq:defg2_2}
	g_2(x) = \sum_{i=0}^4 \alpha_i {\rm ReLU}\left(x-\frac{i}{4}\right),
\end{equation}
for all $x\in \mathbb{R}$ where $(\alpha_0,\alpha_1,\cdots, \alpha_4) = (4, -8, 8, -8, 4)$ based on the relations between the linear finite element functions and ReLU DNNs on 1D~\cite{he2020relu}.
This means that $g_2 \in {\mathcal N}_{1}^5$ and $\varphi(x,y)$ can be represented by a ReLU DNN with only one hidden layer on the entire space $\mathbb{R}^2$.
However, this representation is contradictory to the theorem in~\cite{he2020relu} that the locally supported basis function of the 2D linear finite element cannot be represented globally by ReLU DNN  with just one hidden layer.

Given the global representation of $\varphi(x,y)$, \cite{he2019finite} constructs 
a ReLU DNN with four hidden layers to reproduce $\varphi(x,y)$ explicitly. 
Although \cite{arora2018understanding,he2020relu} show that
a two-hidden-layer ReLU DNN should be able to represent $\varphi(x,y)$ globally on $\mathbb{R}^2$,
the structure of network will be extremely complicated, as well as it requires a 
large number of neurons. Thus, to find a concise formula to 
reproduce $\varphi(x,y)$ on $\mathbb{R}^2$ directly becomes the focus of the inquiry.
Based on the discovery in \eqref{eq:varphi_1-1} and the properties of ReLU DNNs, 
we can construct a ReLU DNN function with two hidden layers to represent $\varphi(x,y)$. 
To simplify the statement of that result, let us first denote the following
${\rm ReLU1}(x)$ function:
\begin{equation}\label{eq:defReLU1}
	{\rm ReLU1}(x)  := {\rm ReLU}(x) - {\rm ReLU}(x-1) \in {\mathcal N}_1^2.
\end{equation}

\begin{lemma}
	The basis function $\varphi(x,y)$ is in  ${\mathcal N}_2^{15}$, more precisely, we have
	\begin{equation}\label{eq:b-DNN2}
		{\varphi}(x,y) = \frac{1}{2}\left(g_{2}\left(\frac{{\rm ReLU1}(x)}{2}\right)+g_2\left(\frac{{\rm ReLU1}(y)}{2}\right)-g_2\left(\frac{{\rm ReLU1}(x)+{\rm ReLU1}(y)}{2}\right)\right)
	\end{equation}
	for all $(x,y) \in \mathbb{R}^2$. 
\end{lemma}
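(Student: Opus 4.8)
The plan is to read the right-hand side of \eqref{eq:b-DNN2} as a coordinatewise clipping followed by the \emph{local} formula for $\varphi$ on the reference square $[0,1]^2$. First I would record the elementary description of ${\rm ReLU1}$ coming from \eqref{eq:defReLU1}: one has ${\rm ReLU1}(t)=t$ on $[0,1]$, ${\rm ReLU1}(t)=0$ for $t<0$, and ${\rm ReLU1}(t)=1$ for $t>1$, so ${\rm ReLU1}$ is exactly the clip-to-$[0,1]$ map, it is the identity on $[0,1]$, and it sends every point outside $[0,1]$ to the endpoint set $\{0,1\}$. Setting $u={\rm ReLU1}(x)$ and $v={\rm ReLU1}(y)$, the right-hand side of \eqref{eq:b-DNN2} is $\tfrac12\bigl(g_2(u/2)+g_2(v/2)-g_2((u+v)/2)\bigr)$ with $(u,v)\in[0,1]^2$, which by \eqref{eq:varphi_1-1} equals $\varphi(u,v)$. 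The whole identity therefore collapses to proving $\varphi(x,y)=\varphi\bigl({\rm ReLU1}(x),{\rm ReLU1}(y)\bigr)$ for all $(x,y)\in\mathbb{R}^2$.

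Then I would split into two cases. If $(x,y)\in[0,1]^2$, the clipping is the identity and both sides are $\varphi(x,y)$, so there is nothing to prove. If $(x,y)\notin[0,1]^2$, the global definition gives $\varphi(x,y)=0$, and at least one clipped coordinate lies in $\{0,1\}$; by the symmetry of the local formula in $u$ and $v$, I may assume $u\in\{0,1\}$. It then suffices to check that the local formula vanishes on the two edges $u=0$ and $u=1$. The edge $u=0$ is immediate from $g_2(0)=0$ and the cancellation of the remaining two terms. The edge $u=1$ uses $g_2(1/2)=0$ together with the half-period symmetry $g_2(t)=g_2(t+\tfrac12)$ for $t\in[0,\tfrac12]$ --- a consequence of $g_2=\phi_{2,1}+\phi_{2,3}$ being two hats translated by $\tfrac12$, equivalently of the nodal values $0,1,0,1,0$ underlying \eqref{eq:defg2_2} --- which yields $g_2((1+v)/2)=g_2(v/2)$ and again a cancellation. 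Hence $\varphi\bigl({\rm ReLU1}(x),{\rm ReLU1}(y)\bigr)=0=\varphi(x,y)$, finishing the pointwise identity.

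Finally I would count neurons to place $\varphi$ in $\mathcal{N}_2^{15}$. The first hidden layer computes $\relu(x),\relu(x-1),\relu(y),\relu(y-1)$, so that ${\rm ReLU1}(x)$, ${\rm ReLU1}(y)$ and ${\rm ReLU1}(x)+{\rm ReLU1}(y)$ are affine combinations of these four outputs and are thus available as pre-activations to the next layer. Using the single-layer form $g_2(w)=\sum_{i=0}^4\alpha_i\relu(w-i/4)$ from \eqref{eq:defg2_2}, each of the three compositions $g_2({\rm ReLU1}(x)/2)$, $g_2({\rm ReLU1}(y)/2)$, $g_2(({\rm ReLU1}(x)+{\rm ReLU1}(y))/2)$ costs five neurons in the second hidden layer, for a total of fifteen, and the output is their affine combination with weights $\tfrac12\alpha_i$ and signs $+,+,-$. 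After padding the first layer with dummy units this gives $\varphi\in\mathcal{N}_{(4,15)}\subseteq\mathcal{N}_2^{15}$.

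The hard part will be the edge-vanishing check, and it is exactly where the naive formula \eqref{eq:varphi_1-1} fails globally while \eqref{eq:b-DNN2} succeeds: the clipping trick is valid only because $\varphi$ is zero on the \emph{entire} boundary $\partial([0,1]^2)$ rather than merely at its corners, so collapsing an out-of-range coordinate onto $\{0,1\}$ always lands on a zero of $\varphi$. Identifying the half-period symmetry of $g_2$ as the mechanism behind the cancellation on the edge $u=1$ is the crux; the remaining steps are routine bookkeeping.
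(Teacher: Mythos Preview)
Your proposal is correct and follows essentially the same approach as the paper's proof: reduce the right-hand side to $\varphi({\rm ReLU1}(x),{\rm ReLU1}(y))$ via \eqref{eq:varphi_1-1}, observe that the clipping is the identity on $[0,1]^2$, and check vanishing outside by noting that at least one clipped coordinate lands in $\{0,1\}$. Your version is in fact more thorough---the paper leaves the edge-vanishing and the neuron count implicit, whereas you verify the former explicitly via the half-period symmetry $g_2(t)=g_2(t+\tfrac12)$ on $[0,\tfrac12]$ and spell out the $(4,15)$ layer widths.
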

\begin{proof}
	According to the definition of $g_2(x)$ in \eqref{func:glinduction} (or \eqref{eq:defg2_2}) and ${\rm ReLU1}(x)$ in \eqref{eq:defReLU1},
	we have $ \varphi(x,y) \in {\mathcal N}_2^{15}$ and
	\begin{equation}\label{key}
		\varphi(x,y) = 0
	\end{equation}
	for any $(x,y) \notin [0,1]^2$ as ${\rm ReLU1}(x)$ and ${\rm ReLU1}(y)$ will equal to $0$ or $1$. 
	Then, \eqref{eq:b-DNN2} holds, given that ${\rm ReLU1}(x) = x$ and ${\rm ReLU1}(y) = y$ for $(x,y) \in [0,1]^2$.
\end{proof}

By employing the above lemma, we can construct the following theorem pertaining to the representation
of linear finite element functions by using ReLU DNNs with only two hidden layers on
two-dimensional space.
\begin{theorem}\label{thm:FEM-DNN}
	Assume $u_h$ is a two-dimensional linear finite element function, which can be written as
	\begin{equation}\label{key}
		u_h(x,y) = \sum_{i=1}^N \mu_i \varphi(T_i(x,y)), 
	\end{equation}
	where $T_i : \mathbb{R}^2 \mapsto \mathbb{R}^2$ is an affine mapping and $N$ denotes
	the number of degree of freedom. Then, $u_h(x)$ 
	can be reproduced globally by a ReLU DNN with only two hidden layers and $15N$ neurons
	at most for each layer, i.e., $u_h(x,y)  \in {\mathcal N}_2^{15N}$.
\end{theorem}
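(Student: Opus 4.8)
The plan is to assemble the network for $u_h$ by running $N$ independent copies of the single-basis-function network in parallel and summing them at the output. Since the preceding lemma gives $\varphi \in {\mathcal N}_2^{15}$, the whole argument reduces to two elementary closure properties of fixed-depth standard ReLU DNNs, namely invariance under affine pre-composition and additivity with summed widths.

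First I would verify that each summand $\mu_i\, \varphi(T_i(x,y))$ again lies in ${\mathcal N}_2^{15}$. Writing the network for $\varphi$ as $\varphi = \theta^3 \circ \relu \circ \theta^2 \circ \relu \circ \theta^1$, the composition with the affine map $T_i$ only affects the first affine layer: $\theta^1 \circ T_i$ is again affine in $(x,y)$ and can serve directly as the new first layer, so the depth and both hidden widths are unchanged. Multiplying by the scalar $\mu_i$ is similarly absorbed into the output map $\theta^3$. Hence $\mu_i\, \varphi(T_i(\cdot)) \in {\mathcal N}_2^{15}$ for every $i$.

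Next I would establish the width-additivity property: if $f_1, \dots, f_N \in {\mathcal N}_2^{15}$ all take the common input $(x,y) \in \mathbb{R}^2$, then $\sum_{i=1}^N f_i \in {\mathcal N}_2^{15N}$. Let $\theta_i^1, \theta_i^2, \theta_i^3$ be the affine maps defining $f_i$. I build the first hidden layer by vertically concatenating the $N$ maps $\theta_i^1$, producing $15N$ neurons all reading the same input; the second hidden layer is taken with a block-diagonal weight matrix assembled from the $\theta_i^2$, so that the $i$-th block of $15$ neurons depends only on the $i$-th block of the first layer and the parallel copies never interfere; and the output layer is the horizontal concatenation of the $\theta_i^3$, whose action equals the sum $\sum_i f_i$. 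This produces exactly a two-hidden-layer ReLU DNN of width $15N$. Note that this is the width-adding addition for standard networks of equal depth, distinct from the depth-adding addition stated for $\widehat{{\mathcal N}}$ in Properties~\ref{prop:net class}.

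Applying the additivity step to the $N$ functions $\mu_i\, \varphi(T_i(\cdot))$, each of which lies in ${\mathcal N}_2^{15}$ by the first step, yields $u_h \in {\mathcal N}_2^{15N}$. I do not expect a genuine obstacle: the construction is pure bookkeeping, and the only point needing care is that the shared input causes no difficulty, since the combined first-layer matrix is simply the vertical concatenation of the individual first-layer matrices while the block-diagonal structure of the second layer keeps the $N$ copies decoupled.
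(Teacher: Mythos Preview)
Your proposal is correct and is precisely the argument the paper has in mind: the paper does not spell out a proof of this theorem at all, treating it as an immediate consequence of the preceding lemma together with the standard affine-precomposition and parallel-sum closure properties you describe. Your write-up faithfully fills in those routine details.
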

According to~\cite{arora2018understanding}, any d-dimensional continuous piecewise
linear function can be represented by ReLU DNNs with at most $\lceil \log_2(d+1) \rceil$ hidden layers.
However, the representation theory in \cite{arora2018understanding} requires an extremely complicated
construction by induction with a tremendously high number of neurons. On the other hand, \cite{he2020relu}
shows that ReLU DNNs with only one hidden layer cannot be used to represent general linear
finite element functions even on two-dimensional space. 
Thus, the focus inevitably turns to finding
a concise and explicit representation of linear finite element functions using ReLU DNNs with 
only two hidden layers on two-dimensional space. 
Here, Theorem~\ref{thm:FEM-DNN} provides the answer for any two-dimensional linear finite element functions on a uniform mesh or meshes, which can be obtained by adding affine mappings.

\section{Concluding remarks}\label{sec:conclusion}
By carefully studying the hierarchical representation for $s(x) = x^2$ and $m(x,y) = xy$,
we finally establish a new understanding, interpretation, and extension
for the approximation results of ReLU DNNs for these two functions, which play a critically important
role in a series of recent approximation results of ReLU DNNs. 
These discoveries provide some precise and nontrivial connections between ReLU DNNs and
finite element functions especially for the hierarchical basis method. 
By applying these connections directly, we obtained the tightest error bound for 
approximating polynomials with different norms. 
We also showed that ReLU DNNs with this special structure can 
be applied only to approximate quadratic functions at an exponential rate. 
Furthermore, we unexpectedly developed an elegant and explicit formula whereby ReLU DNNs with 
two hidden layers can reproduce the basis function of the two-dimensional finite
element functions on the uniform grid, which happens to be the minimal 
number of layers needed to represent a locally supported piecewise linear function on $\mathbb{R}^2$ 
as discussed in \cite{he2020relu}.

The connections between hierarchical basis and the ReLU DNNs open up some promising
directions for the application of deeper and richer mathematical structures in finite element
or other classical approximation methods in relation to ReLU DNNs. 
For example, it is well worth extending the explicit representation of finite element
functions using ReLU DNNs on high-dimensional and unstructured meshes.
A related and equally intriguing problem is that of determining how to apply ReLU DNNs in
numerical solutions for partial differential equations~\cite{lu2021deepxde} based on these connections with finite elements.

\section*{Acknowledgements}
This work was partially supported by the Center for Computational Mathematics and Applications (CCMA) at The Pennsylvania State University, the Verne M. William Professorship Fund 
from The Pennsylvania State University, and the National Science Foundation (Grant No. DMS-1819157).

\bibliographystyle{plain}
\bibliography{dnn_hierarchy.bib}

\end{document}